\documentclass{siamart251216}

\usepackage{amssymb,mathtools}
\usepackage{microtype}
\usepackage{enumitem}

\newsiamremark{remark}{Remark}

\newcommand{\Lp}{L_p}

\newcommand{\norm}[2]{\left\|#1\right\|_{#2}}

\newcommand{\R}{\mathbb{R}}

\headers{Accelerating Langevin Dynamics}{Z. Cai, W. Gan, B. Miao and Z. Shi}

\title{Accelerating Langevin Dynamics with Weighted Laplacian Flow: Global Well-Posedness and Exponential Convergence \thanks{This work was supported by National Natural Science Foundation of China (NSFC) 92370125.}}

\author{
Ziyue Cai\thanks{Qiuzhen College, Tsinghua University, Beijing, China, 100084. \email{cai-zy22@mails.tsinghua.edu.cn}}
\and
Weiye Gan\thanks{Department of Mathematical Sciences, Tsinghua University, Beijing, China, 100084. \email{gwy22@mails.tsinghua.edu.cn}}
\and
Borui Miao\thanks{Yau Mathematical Sciences Center, Tsinghua University,
Beijing, China, 100084. \email{ mbr20@mails.tsinghua.edu.cn}}
\and
Zuoqiang Shi\thanks{Yau Mathematical Sciences Center, Tsinghua University,
Beijing, China, 100084. \&
Yanqi Lake Beijing Institute of Mathematical Sciences and Applications,
Beijing, China, 101408. \email{zqshi@tsinghua.edu.cn.} Corresponding author.}}

\begin{document}
\maketitle

\begin{abstract}
We study an accelerated Langevin equation in which the classical overdamped
Langevin drift is supplemented by a nonlocal gradient velocity determined through a
target-weighted Poisson equation.  The target \(p\) is only assumed to be strictly
positive and need not be normalized.  Writing the dynamics in terms of
\(\omega=\log(p/q)\) yields a coupled parabolic--elliptic system whose parabolic
component contains the usual Langevin diffusion, whereas the elliptic component
generates a global transport.  The system is invariant under multiplication of
\(p\) by a positive constant.
On a bounded smooth domain with homogeneous Neumann boundary conditions,
we establish global existence and uniqueness of classical solutions for strictly
positive \(C^{2+\alpha}\) target weights and compatible \(C^{2+\alpha}\) initial
data.  For long-time behavior, the oscillation of \(\omega\) decays at the explicit
rate \(e^{-t}\) and the solution density exponentially converges to target density.  Moreover, an exact
entropy-dissipation identity holds relative to the target density, in
which the relative Fisher information and the Jeffreys divergence give the two
dissipation mechanisms associated with the Langevin and weighted-Poisson
components.  In particular,
KL divergence also converges to zero with rate $e^{-t}$
without a convexity assumption on the target.
\end{abstract}

\begin{keywords}
Langevin dynamics, weighted Laplacian flow, parabolic--elliptic systems,
global well-posedness, entropy dissipation, exponential convergence
\end{keywords}

\begin{AMS}
35Q84, 35B40, 35A01, 60J60
\end{AMS}


\section{Introduction}

Sampling from an unnormalized target is a fundamental problem in
Bayesian computation, molecular dynamics, statistical physics, and machine
learning.  A canonical example is the Gibbs weight
\[
p_\kappa(x)=e^{-U(x)/\kappa},
\qquad
\pi_\kappa(x)=Z_\kappa^{-1}p_\kappa(x),
\qquad
Z_\kappa=\int p_\kappa(x)\,dx,
\]
where \(U\) is an energy landscape and \(\kappa>0\) denotes the temperature.
The normalizing constant \(Z_\kappa\) is often unavailable in sampling problems.
Langevin dynamics is nevertheless well defined because its drift depends only on
the logarithmic gradient of the target, which is unchanged by multiplication by a
positive constant.  The resulting connection between stochastic sampling and
Fokker--Planck equations makes available a broad collection of analytic tools,
including functional inequalities, Wasserstein gradient-flow methods, coupling
arguments, spectral theory, and hypocoercivity.

Let \(p=e^{-U}\) denote an unnormalized target weight and set
\[
Z_p:=\int_\Omega p(x)\,dx,
\qquad
\pi:=\frac{p}{Z_p}.
\]
Since \(\nabla\log p=\nabla\log\pi\), the overdamped Langevin diffusion can be
written as
\[
dX_t=-\nabla U(X_t)\,dt+\sqrt{2}\,dW_t
=\nabla\log p(X_t)\,dt+\sqrt{2}\,dW_t.
\]
If \(q_t\) denotes the density of \(X_t\), then
\[
\partial_tq_t
=
\Delta q_t-\nabla\cdot(q_t\nabla\log p)
=
\nabla\cdot\left(
q_t\nabla\log\frac{q_t}{\pi}
\right)
=
\nabla\cdot\left(
q_t\nabla\log\frac{q_t}{p}
\right).
\]
On a bounded domain the corresponding reflected dynamics is represented at the
density level by the same Fokker--Planck equation together with a no-flux boundary
condition.  The variational formulation of Jordan, Kinderlehrer, and Otto
\cite{JKO98} identifies this evolution as the \(2\)-Wasserstein gradient flow of
the relative entropy with respect to the normalized target \(\pi\),
\[
\operatorname{KL}(q\Vert \pi)
=
\int q\log\frac{q}{\pi}\,dx.
\]
At the continuous level, the entropy satisfies
\[
\frac{d}{dt}\operatorname{KL}(q_t\Vert \pi)
=
-\int q_t
\left|\nabla\log\frac{q_t}{\pi}\right|^2dx
=
-\int q_t
\left|\nabla\log\frac{q_t}{p}\right|^2dx.
\]
Consequently, a logarithmic Sobolev inequality yields exponential relaxation in
relative entropy.  This mechanism is classical for strongly log-concave targets
through the Bakry--\'Emery criterion \cite{BE85}; Poincar\'e and transportation
inequalities provide related \(L^2\) and Wasserstein estimates
\cite{OV00}.  Quantitative convergence beyond global convexity can be obtained by
Lyapunov, coupling, and spectral methods; see, for example,
\cite{Eberle16,RT96}.

The discretization of Langevin diffusions has also been studied extensively. 
Non-asymptotic estimates for the unadjusted Langevin algorithm and related schemes
have been established in total variation, Wasserstein distance, relative entropy,
and R\'enyi divergence under a variety of convexity and regularity assumptions;
representative contributions include
\cite{Dal17,DM17,DM19,VW19,RRT17,CCAY18,MFWB22,CDJB20}.
These results demonstrate the robustness of the Langevin framework, while also
showing that its quantitative performance may deteriorate substantially when the
target is strongly nonconvex.

A principal obstruction is metastability.  For multimodal Gibbs measures with
well-separated local minima, transitions between metastable regions may occur on
exponentially long time scales.  Eyring--Kramers and potential-theoretic analyses
make this dependence precise \cite{BEGK04,MS14}.  In the
small-temperature regime \(p_\kappa\propto e^{-U/\kappa}\), characteristic
transition times may scale exponentially in \(1/\kappa\).  This phenomenon motivates
the construction of modified dynamics in which the target distribution is retained
while the transport between distant regions of state space is enhanced.

Several acceleration mechanisms have been proposed.  Underdamped and kinetic
Langevin methods introduce momentum and exploit hypocoercive interactions between
transport and dissipation \cite{CCBJ18,DRD20,MaEtAl21,MouEtAl21}.  Nonreversible
perturbations retain the invariant measure while adding a divergence-free
circulation. The effects on spectral gaps, convergence, and asymptotic variance
have been studied in \cite{HHMS93,LNP13,RBS15,DLP16,LS18}.  At the density level,
birth--death dynamics provides a genuinely nonlocal acceleration mechanism
\cite{LLN19,LSW23}.  A related principle arises in enhanced dissipation:
conservative transport can redistribute a scalar so that diffusion acts more
effectively, and quantitative forms of this mechanism have been developed in
\cite{CKRZ08,FI19,CDE20,FHP10,Wei21}.

The present work is motivated by Weighted Laplacian Flow (WLF)
\cite{GWS26}, which constructs a deterministic transport velocity through a
 target-weighted elliptic problem.  
Consider Langevin dynamics with an auxiliary velocity field
\[
dX_t=
\bigl(\nabla\log p(X_t)+v(t,X_t)\bigr)\,dt+\sqrt{2}\,dW_t.
\]
The associated Fokker--Planck equation implies that
\[
\partial_t\omega
+
v\cdot\nabla\omega
+
|\nabla\omega|^2
=
L_p\omega+\frac1p\nabla\cdot(pv).
\]
Based on the weighted Laplacian flow, we choose \(v=\nabla\phi\), where \(\phi\) solves
\[
L_p\phi=-(\omega-\bar\omega).
\]
Here \(\bar\omega\) is the unique scalar for which the Neumann problem is
compatible, namely,
\[
\int_\Omega p(\omega-\bar\omega)\,dx=0.
\]
Thus \(\bar\omega\) may be regarded as an additional scalar unknown determined
simultaneously with \(\phi\).
The resulting parabolic--elliptic system is
\[
\left\{
\begin{aligned}
L_p\phi&=-(\omega-\bar\omega),\\
\partial_t\omega+\nabla\phi\cdot\nabla\omega+|\nabla\omega|^2
&=L_p\omega-(\omega-\bar\omega).
\end{aligned}
\right.
\]
Homogeneous Neumann conditions are imposed on both \(\phi\) and \(\omega\), and the
additive constant in \(\phi\) is fixed by a zero-mean normalization.  In this
formulation, \(L_p\omega\) represents the local Langevin relaxation, whereas
the weighted-Poisson equation produces a nonlocal transport correction.

The model possesses two complementary coercive structures.  The first is a decay
for \(\omega\), which yields exponential contraction of its spatial oscillation.
The second is an exact entropy-dissipation identity.  For a normalized density
\(q\), define
\[
I(q\Vert \pi)
:=
\int_\Omega q\left|\nabla\log\frac{q}{\pi}\right|^2dx
=
\int_\Omega q\left|\nabla\log\frac{q}{p}\right|^2dx
\]
and
\[
J(q,\pi)
:=
\operatorname{KL}(q\Vert \pi)+\operatorname{KL}(\pi\Vert q).
\]
We prove in Section~\ref{sec:long-time} that
\begin{equation}
\label{eq:intro-hybrid-entropy}
\frac{d}{dt}\operatorname{KL}(q_t\Vert \pi)
=
-I(q_t\Vert \pi)-J(q_t,\pi).
\end{equation}
The two terms on the right-hand side correspond, respectively, to the Langevin
diffusion and the weighted-Poisson transport.  Since
\(J(q,\pi)\ge \operatorname{KL}(q\Vert \pi)\), identity
\eqref{eq:intro-hybrid-entropy} yields exponential convergence in relative entropy
with exactly the same rate as in the normalized formulation.

The principal contributions of this paper are as follows.
\begin{enumerate}[label=\textup{(\roman*)},leftmargin=2.5em]
\item We derive the accelerated Langevin equation from an overdamped Langevin
      diffusion with an auxiliary gradient velocity determined by a weighted
      Neumann problem.
\item We establish global classical well-posedness by a contraction
      argument and prove positive-time classical regularization.
\item We prove exponential contraction of
      \(\operatorname{osc}_\Omega\omega(t)\) and establish the exact
      Jeffreys--Fisher dissipation law.
\end{enumerate}

The remainder of the paper is organized as follows.  Section~\ref{sec:model}
formulates the model and states the main well-posedness theorem.  Sections~3--5
develop the local Sobolev theory, global a priori estimates, and the classical
regularity upgrade.  Section~\ref{sec:long-time} establishes the long-time
convergence and entropy-dissipation results.  Appendices~\ref{app:semigroup}--\ref{app:dini}
collect the heat-semigroup estimates, fractional Gr\"onwall inequalities, and
Dini-derivative arguments used in the proofs.

\section{Model formulation and main result}
\label{sec:model}

This section formulates the accelerated Langevin equation considered throughout
the paper.  The derivation starts from an overdamped Langevin diffusion with an
auxiliary velocity field and subsequently specifies this field through a
target-weighted Poisson equation.

\subsection{Notation and standing assumptions}

Let \(\Omega\subset\R^d\) be a bounded \(C^{2+\alpha}\) domain for some
\(0<\alpha<1\).  The target is represented by a strictly positive weight
\(p\), not necessarily normalized, satisfying
\begin{equation}
\label{eq:p-assumptions}
p\in C^{2+\alpha}(\overline\Omega),\qquad
0<p_*\le p(x)\le p^*.
\end{equation}

For a positive density \(q=q(t,x)\), define
\begin{equation}
\label{eq:omega-def}
\omega(t,x):=\log\frac{p(x)}{q(t,x)},
\qquad
q(t,x)=p(x)e^{-\omega(t,x)}.
\end{equation}
We write
\begin{equation}
\label{eq:Lp-def-model}
\Lp h
:=
\frac1p\nabla\cdot(p\nabla h)
=
\Delta h+\nabla\log p\cdot\nabla h
\end{equation}
for the weighted Laplacian associated with \(p\).

For a function \(w\), we denote by \(\bar w\) the unique scalar determined by
the compatibility condition
\begin{equation}
\label{eq:p-mean-model}
\int_\Omega p(x)\bigl(w(x)-\bar w\bigr)\,dx=0.
\end{equation}
Equivalently, \(\bar w\) is the constant that makes the weighted Neumann problem
\[
\Lp\psi=-(w-\bar w),
\qquad
\partial_n\psi=0,
\]

\subsection{Augmented Langevin dynamics}

Consider an overdamped Langevin diffusion supplemented by a time-dependent velocity
field \(v=v(t,x)\).  The drift depends on the target only through
\(\nabla\log p\), which is invariant under multiplication of \(p\) by a positive
constant:
\begin{equation}
\label{eq:augmented-langevin-sde}
dX_t
=
\bigl(\nabla\log p(X_t)+v(t,X_t)\bigr)\,dt
+
\sqrt{2}\,dW_t.
\end{equation}
On the bounded domain \(\Omega\), the reflecting boundary condition will be imposed
at the level of the associated Fokker--Planck equation.  Its interior evolution is
\begin{equation}
\label{eq:augmented-fokker-planck}
\partial_t q
+
\nabla\cdot\bigl(q(\nabla\log p+v)\bigr)
=
\Delta q.
\end{equation}

Consequently, evolution of $\omega$ satisfies
\begin{equation}
\label{eq:omega-general-v}
\partial_t\omega
+
v\cdot\nabla\omega
+
|\nabla\omega|^2
=
\Lp\omega
+
\frac1p\nabla\cdot(pv).
\end{equation}
Thus the auxiliary velocity contributes a transport term together with its
\(p\)-weighted divergence.

We restrict the auxiliary velocity to gradient form,
\begin{equation}
\label{eq:v-gradient-choice}
v=\nabla\phi,
\end{equation}
and determine the potential \(\phi\) through
\begin{equation}
\label{eq:wlf-linear-model}
\Lp\phi
=
-(\omega-\bar\omega).
\end{equation}
The scalar \(\bar\omega\) is determined together with \(\phi\) by the
compatibility condition \eqref{eq:p-mean-model}.  Then
\eqref{eq:omega-general-v} reduces to
\begin{equation}
\label{eq:hybrid-omega-linear-model}
\partial_t\omega
+
\nabla\phi\cdot\nabla\omega
+
|\nabla\omega|^2
=
\Lp\omega-(\omega-\bar\omega).
\end{equation}
In addition to the diffusion term $\Lp \omega$, the auxiliary velocity introduces an explicit dissipation term which will imply exponential convergence.

We impose homogeneous Neumann boundary conditions
\begin{equation}
\label{eq:model-neumann}
\partial_n\phi=\partial_n\omega=0
\qquad\text{on }\partial\Omega.
\end{equation}
The PDE model we proposed is therefore
\begin{equation}
\label{eq:system}
\left\{
\begin{aligned}
\Lp\phi&=-(\omega-\bar\omega),
&&\text{in }\Omega,\\
\partial_t\omega+\nabla\phi\cdot\nabla\omega+|\nabla\omega|^2
&=\Lp\omega-(\omega-\bar\omega),
&&\text{in }\Omega,\\
\partial_n\phi=\partial_n\omega&=0,
&&\text{on }\partial\Omega,\\
\int_\Omega\phi\,dx&=0,\\
\omega(0,\cdot)&=\omega_0.
\end{aligned}
\right.
\end{equation}
The elliptic equation determines the auxiliary velocity instantaneously from the
current logarithmic density-ratio discrepancy, while the parabolic equation governs
the coupled local and nonlocal relaxation. Indeed,
the PDE system \eqref{eq:system} is not a separate sampling dynamics. Rather, it
provides a way to construct, from the current logarithmic density ratio
\(\omega=\log(p/q)\), the nonlocal velocity field.
With the nonlocal velocity field, the accelerated Langevin dynamics is
\begin{equation}
dX_t
=
\bigl(\nabla\log p(X_t)+\nabla\phi_t(X_t)\bigr)\,dt
+
\sqrt{2}\,dW_t,\nonumber
\end{equation}
together with reflection at \(\partial\Omega\).  Its Fokker--Planck equation is exactly the density
equation obtained from \eqref{eq:system} under the transformation
\[
q_t=p e^{-\omega(t,\cdot)}.
\]
Consequently, if \(X_0\) has density \(q_0\), then \(X_t\) has density \(q_t\). The evolution of $q_t$ can be recovered exactly by the evolution of $\omega$. So we will focus on the property of the PDE system \eqref{eq:system}. 

One of the main results is the global well-posedness.
\begin{theorem}[Global classical well-posedness]
\label{thm:main}
Assume \eqref{eq:p-assumptions} and let
\[
\omega_0\in C^{2+\alpha}(\overline\Omega),
\qquad
\partial_n\omega_0=0
\quad\text{on }\partial\Omega.
\]
Then the system \eqref{eq:system} admits a unique global classical solution
\((\omega,\phi)\).  More precisely, for every \(T>0\) and every
\(0<\beta<\alpha\),
\[
\omega\in
C^{1+\beta/2,\,2+\beta}([0,T]\times\overline\Omega),
\qquad
\phi\in
C^{\beta/2}\!\left([0,T];C^{2+\beta}(\overline\Omega)\right).
\]
\end{theorem}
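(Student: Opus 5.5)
The plan has three stages: local existence by contraction, global continuation via a priori bounds, and a parabolic Schauder upgrade to the stated H\"older classes.

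\textbf{Local solution.} Given $\omega$, the map $\omega\mapsto(\bar\omega,\phi)$ is linear and nonlocal. The scalar $\bar\omega=\int p\omega/\int p$ is explicit. Since $L_p$ is uniformly elliptic with $C^{1+\alpha}$ lower-order coefficient $\nabla\log p$, the Neumann problem $L_p\phi=-(\omega-\bar\omega)$, $\partial_n\phi=0$, $\int\phi=0$, has a unique solution. Elliptic $W^{2,r}$ and Schauder theory give
\[
\|\phi\|_{W^{2,r}}\lesssim\|\omega\|_{L^r},\qquad \|\phi\|_{C^{2+\beta}}\lesssim\|\omega\|_{C^{\beta}}.
\]
We then write the $\omega$-equation in mild form with the Neumann semigroup $e^{t(L_p-I)}$:
\[
\omega(t)=e^{t(L_p-I)}\omega_0+\int_0^t e^{(t-s)(L_p-I)}\bigl(\bar\omega-\nabla\phi\cdot\nabla\omega-|\nabla\omega|^2\bigr)\,ds .
\]
We work in $X_T=C([0,T];W^{1,\infty})$, or $C([0,T];W^{1,r})$ with $r>d$. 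The semigroup estimate $\|\nabla e^{tL}f\|\lesssim t^{-1/2}\|f\|$ from Appendix~\ref{app:semigroup} makes the Duhamel map a contraction on a ball for small $T$. The nonlinearity $\nabla\phi\cdot\nabla\omega+|\nabla\omega|^2$ is locally Lipschitz from $W^{1,\infty}$ to $L^\infty$, using $\|\nabla\phi\|_\infty\lesssim\|\omega\|_\infty$. Uniqueness within this class follows from the same estimate.

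\textbf{Global a priori bounds.} This is the key step. For $L^\infty$, write $m(t)=\max\omega$ and $M(t)=\min\omega$. At a spatial maximum, which by the Neumann condition and Hopf may be taken as interior or handled via the boundary condition, we have $\nabla\omega=0$ and $L_p\omega\le0$. Hence, in the Dini sense of Appendix~\ref{app:dini},
\[
D^+m\le-(m-\bar\omega),
\]
and symmetrically for the minimum. Since $\bar\omega$ is a $p$-average, it lies in $[M,m]$, so $\min\omega_0\le\omega\le\max\omega_0$ for all time, and $\operatorname{osc}\omega$ decays. This gives a uniform $\|\omega\|_\infty$ bound, and hence $\|\phi\|_{W^{2,r}}$ and $\|\nabla\phi\|_{C^\gamma}$ bounds uniformly in $t$.

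Next comes the gradient bound, which I expect to be the main obstacle because of the quadratic term $|\nabla\omega|^2$. The first approach is a Cole--Hopf substitution: $u=e^{-\omega}=q/p$ turns $|\nabla\omega|^2$ into a linear term, namely
\[
\partial_t u+\nabla\phi\cdot\nabla u=L_pu+(\omega-\bar\omega)u,
\]
which is a linear parabolic equation for $u$ with drift $\nabla\phi$ bounded in $C^\gamma$ and bounded potential. Standard $W^{2,1}_r$ and H\"older estimates for $u$ on $[0,T]$ then yield a bound on $\|\nabla u\|_\infty$. Since $u$ is bounded below by $e^{-\max\omega_0}$, this gives a bound on $\|\nabla\omega\|_\infty$ with constants depending on $T$. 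If the boundary terms cause trouble, an alternative is a Bernstein argument on $|\nabla\omega|^2$. Alternatively, a fractional Gr\"onwall argument (Appendix on Gr\"onwall) applied to the mild formulation for $\nabla\omega$, with the $L^\infty$ bound on $\omega$ already in hand, controls the growth. In all cases the $X_T$ norm cannot blow up in finite time, so the local solution extends to all $T$.

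\textbf{Regularity upgrade.} With $\omega$ bounded in $W^{1,\infty}$, the coefficients $\nabla\phi$ and $\nabla\omega$ are H\"older in $x$. Time H\"older continuity follows from the mild form. Parabolic Schauder theory with Neumann data, using the compatibility $\partial_n\omega_0=0$ and $\omega_0\in C^{2+\alpha}$, then gives $\omega\in C^{1+\beta/2,2+\beta}$ by bootstrapping. The loss from $\alpha$ to $\beta$ absorbs the H\"older exponents of the coefficients. Finally, elliptic Schauder estimates applied to $\omega(t)-\omega(s)$ give $\phi\in C^{\beta/2}([0,T];C^{2+\beta})$. For this it suffices to know that $\omega\in C^{\beta/2}([0,T];C^{\beta})$, which follows from interpolation within $C^{1+\beta/2,2+\beta}$.
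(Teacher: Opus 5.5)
\paragraph{Overall comparison.} Your plan follows the paper's architecture. You eliminate $\phi$ through the weighted Neumann solution operator and solve the reduced equation by contraction. You get $\min\omega_0\le\omega\le\max\omega_0$ from Dini derivatives at spatial extrema. You remove the quadratic term with $u=e^{-\omega}$ to obtain a gradient bound and continuation, and you finish with Schauder estimates.

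Two differences are harmless:
\begin{itemize}
\item You contract in $W^{1,\infty}$, where the $t^{-1/2}$ gradient bound suffices because $|\nabla\omega|^2\in L^\infty$. In your alternative $W^{1,r}$ setting it would not suffice, since $|\nabla\omega|^2$ is only in $L^{r/2}$. The paper therefore uses the $L^{q/2}\to W^{1,q}$ bound with rate $t^{-1/2-d/(2q)}$, which is integrable exactly when $q>d$.
\item You bound $u$ by linear $W^{2,1}_r$ theory with bounded drift $\nabla\phi$ and bounded potential $\omega-\bar\omega$. The paper instead uses a singular Gr\"onwall inequality for $\|u\|_{W^{1,q}}$ in the mild formulation.
\end{itemize}
Of your fallbacks, the fractional Gr\"onwall argument on the mild equation for $\nabla\omega$ would not close. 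The $L^\infty$ bound on $\omega$ does not linearize $|\nabla\omega|^2$, so you are left with the quadratic Volterra inequality that the paper notes is insufficient.

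\paragraph{The gap: the start of the regularity upgrade.} The claim that, with $\omega$ bounded in $W^{1,\infty}$, ``$\nabla\omega$ is H\"older in $x$'' is false. A $W^{1,\infty}$ bound gives only $\nabla\omega\in L^\infty$. Parabolic Schauder theory needs
\[
\mathcal N(\omega)=-\nabla\phi\cdot\nabla\omega-|\nabla\omega|^2-(\omega-\bar\omega)\in C^{\beta/2,\beta}([0,T]\times\overline\Omega),
\]
that is, $\nabla\omega$ jointly H\"older in $(t,x)$ up to $t=0$. An unspecified appeal to the mild form does not supply this, so the bootstrap has no starting point.

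The paper bridges this with an intermediate $L^r$ step:
\begin{enumerate}
\item Treat $\mathcal N(\omega)\in L^r(0,T;L^r)$ as given data, with $r>d+2$.
\item Use $\omega_0\in D(A_r)$, which is where $\omega_0\in C^{2+\alpha}$ and $\partial_n\omega_0=0$ enter. Maximal $L^r$-regularity then gives $\omega\in W^{1,r}(0,T;L^r)\cap L^r(0,T;W^{2,r})$.
\item Parabolic Sobolev--Morrey embedding gives $\omega\in C^{(1+\gamma)/2,1+\gamma}$ with $\gamma=1-(d+2)/r$.
\item Interpolation yields $\omega\in C^{\beta/2}([0,T];C^\beta)$, hence $\phi\in C^{\beta/2}([0,T];C^{2+\beta})$ and $\mathcal N(\omega)\in C^{\beta/2,\beta}$. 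Only then is Schauder applied.
\end{enumerate}
You already have this tool. Your $W^{2,1}_r$ estimate for $u$ with $r>d+2$ makes $\nabla u$, and hence $\nabla\omega=-\nabla u/u$, H\"older in $(t,x)$; you only extracted the sup norm.

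\paragraph{Ordering.} The maximum principle and the derivation of the $u$-equation are pointwise arguments. They need $\omega$ to be classical on the local existence interval before any global bound is known. The paper supplies this through Lemma~\ref{lem:positive-time-classical}. Your upgrade is local in time, so run it on each $[0,T']\subset[0,T_{\max})$ before the a priori estimates, not after them.
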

To prove the global well-posedness, we first establish local well-posedness in weak sense in Section 3. Then global well-posedness in weak sense is proved in Section 4. Finally, the weak solution is lifted to strong solution in Section 5. 

\section{Local well-posedness in $W^{1,q}$}

Fix \(q>\max\{d,2\}\).  The local theory is developed after eliminating the
elliptic variable through the solution operator associated with the weighted
Neumann problem.  The resulting scalar parabolic equation is treated by a
contraction argument in \(C([0,T];W^{1,q}(\Omega))\).

\subsection{The elliptic solution operator}

For \(w\in L^q(\Omega)\), let \(\Phi[w]\) be the solution of
\begin{equation}
\label{eq:elliptic-Phi}
\left\{
\begin{aligned}
\Lp\Phi[w]&=-(w-\bar w),\\
\partial_n\Phi[w]&=0,\\
\int_\Omega\Phi[w]\,dx&=0.
\end{aligned}
\right.
\end{equation}
The compatibility condition is automatic.

\begin{lemma}[Elliptic estimates]
\label{lem:elliptic}
Let \(1<q<\infty\).  There exists a constant
\(C=C(\Omega,p,q)>0\) such that, for every \(w,w_1,w_2\in L^q(\Omega)\),
\begin{align}
\norm{\Phi[w]}{W^{2,q}}
&\le C\norm{w}{L^q},
\label{eq:ellW2q}\\
\norm{\Phi[w_1]-\Phi[w_2]}{W^{2,q}}
&\le C\norm{w_1-w_2}{L^q}.
\label{eq:ell-difference}
\end{align}
If \(q>d\), then, possibly after enlarging \(C\),
\begin{align}
\norm{\nabla\Phi[w]}{L^\infty}
&\le C\norm{w}{L^q},
\label{eq:ell-Linfty}\\
\norm{\nabla(\Phi[w_1]-\Phi[w_2])}{L^\infty}
&\le C\norm{w_1-w_2}{L^q}.
\label{eq:ell-Linfty-diff}
\end{align}
\end{lemma}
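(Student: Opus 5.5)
The plan is to reduce everything to standard $L^q$ theory for the Neumann problem of a divergence-form operator with $C^{1+\alpha}$ coefficients, after fixing the kernel and the compatibility. First, observe that $\bar w=\int_\Omega pw\,dx/\int_\Omega p\,dx$, so that $|\bar w|\le (p^*/p_*)|\Omega|^{-1}\norm{w}{L^1}\le C\norm{w}{L^q}$ by H\"older. Hence $f:=-(w-\bar w)$ satisfies $\norm{f}{L^q}\le C\norm{w}{L^q}$ and $\int_\Omega pf\,dx=0$. The problem $\nabla\cdot(p\nabla\Phi)=pf$ with $\partial_n\Phi=0$ is then a uniformly elliptic Neumann problem. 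Since $p\in C^{2+\alpha}$ with $p\ge p_*$, its coefficient $p$ is $C^1$, the domain is $C^{2+\alpha}$, and the conormal derivative $p\partial_n\Phi$ is a nonzero multiple of $\partial_n\Phi$. The right-hand side satisfies the Fredholm compatibility condition, because the adjoint kernel consists of constants. By Fredholm theory in $W^{2,q}$ (Agmon--Douglis--Nirenberg / Gilbarg--Trudinger-type $L^q$ estimates for the Neumann problem), there is a solution that is unique up to additive constants. The normalization $\int_\Omega\Phi\,dx=0$ selects exactly one solution. The definition of $\Phi[w]$ is therefore meaningful, and $\Phi$ is linear in $w$, since $w\mapsto\bar w$ is linear.

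The key estimate is \eqref{eq:ellW2q}. I would start from the a priori bound
\[
\norm{\Phi}{W^{2,q}}\le C\bigl(\norm{\Lp\Phi}{L^q}+\norm{\Phi}{L^q}\bigr),
\]
valid for $\partial_n\Phi=0$. I would then remove the lower-order term by a standard compactness (contradiction) argument. Suppose there were $\Phi_k$ with zero mean, $\norm{\Phi_k}{W^{2,q}}=1$ and $\norm{\Lp\Phi_k}{L^q}\to0$. Rellich's theorem gives a subsequence converging in $W^{1,q}$, and the a priori bound upgrades this to convergence in $W^{2,q}$. The limit then solves $\Lp\Phi=0$ with Neumann data and zero mean. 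Testing against $p\Phi$ shows $\Phi$ is constant (for $q<2$ one first bootstraps regularity), so $\Phi=0$, which contradicts the normalization. This is the only step with real content. I expect the main obstacle to be carefully citing the $W^{2,q}$ Neumann regularity for the operator with the first-order term $\nabla\log p\cdot\nabla$; this is harmless because $\nabla\log p\in C^{1+\alpha}$ is bounded.

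The difference estimate \eqref{eq:ell-difference} follows immediately from linearity: $\Phi[w_1]-\Phi[w_2]=\Phi[w_1-w_2]$. For $q>d$, Morrey's embedding $W^{1,q}(\Omega)\hookrightarrow L^\infty(\Omega)$ on the bounded $C^{2+\alpha}$ (hence Lipschitz) domain gives
\[
\norm{\nabla\Phi}{L^\infty}\le C\norm{\nabla\Phi}{W^{1,q}}\le C\norm{\Phi}{W^{2,q}}.
\]
Combined with \eqref{eq:ellW2q}, this yields \eqref{eq:ell-Linfty}. Linearity again gives \eqref{eq:ell-Linfty-diff}.
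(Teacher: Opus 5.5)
Your proposal is correct and follows essentially the same route as the paper: bound $|\bar w|\le C\|w\|_{L^q}$ via H\"older and $p\ge p_*$, invoke $W^{2,q}$ Neumann regularity with the zero-mean normalization removing the constant kernel, then use linearity for the difference estimates and Sobolev embedding for $q>d$. The only difference is that you spell out the compactness argument that removes the lower-order term $\|\Phi\|_{L^q}$ from the a priori bound, which the paper folds into the phrase ``standard $W^{2,q}$-regularity for the Neumann problem.''
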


\begin{proof}
The operator \(\Lp\) has the divergence-form representation
\[
\Lp\Phi=\frac1p\nabla\cdot(p\nabla\Phi).
\]
By \eqref{eq:p-assumptions}, the associated Neumann problem is uniformly elliptic
with \(C^{1+\alpha}\) coefficients.  The compatibility condition has already been
verified, and the normalization \(\int_\Omega\Phi\,dx=0\) removes the kernel of
constants.  Standard \(W^{2,q}\)-regularity for the Neumann problem therefore gives
\[
\norm{\Phi[w]}{W^{2,q}}
\le C\norm{w-\bar w}{L^q}.
\]
By the compatibility condition \eqref{eq:p-mean-model},
\[
|\bar w|\int_\Omega p\,dx
=
\left|\int_\Omega pw\,dx\right|
\le
\norm{p}{L^{q'}}\norm{w}{L^q}.
\]
Since \(p\ge p_*>0\),
\[
\int_\Omega p\,dx\ge p_*|\Omega|,
\]
and therefore
\[
|\bar w|\le C\norm{w}{L^q},
\qquad
\norm{w-\bar w}{L^q}\le C\norm{w}{L^q}.
\]
This proves \eqref{eq:ellW2q}.  Since \(\Phi\) depends linearly on its datum,
the same estimate applied to
\(\Phi[w_1]-\Phi[w_2]=\Phi[w_1-w_2]\) yields
\eqref{eq:ell-difference}.  If \(q>d\), the Sobolev embedding
\[
W^{2,q}(\Omega)\hookrightarrow W^{1,\infty}(\Omega)
\]
implies \eqref{eq:ell-Linfty} and \eqref{eq:ell-Linfty-diff}.
\end{proof}

\subsection{Local solvability by a contraction argument}

Let \(S(t)=e^{tL_p}\) denote the Neumann heat semigroup generated by \(L_p\). Eliminating
\(\phi\) by \(\phi=\Phi[\omega]\), the scalar equation becomes
\begin{equation}
\label{eq:omega-reduced}
\partial_t\omega
=
L_p\omega
-\nabla\Phi[\omega]\cdot\nabla\omega
-|\nabla\omega|^2
-(\omega-\bar\omega).
\end{equation}
Hence a mild solution satisfies
\begin{align}
\omega(t)
={}&S(t)\omega_0
-\int_0^t S(t-s)
\bigl(\nabla\Phi[\omega(s)]\cdot\nabla\omega(s)\bigr)\,ds
\nonumber\\
&-\int_0^tS(t-s)|\nabla\omega(s)|^2\,ds
-\int_0^tS(t-s)(\omega(s)-\bar\omega(s))\,ds.
\label{eq:mild}
\end{align}

We use the following semigroup estimates, proved in detail in
Appendix~\ref{app:semigroup}; see in particular
\eqref{eq:appendix-seven}--\eqref{eq:appendix-eight}. For \(0<t\le1\),
\begin{align}
\norm{S(t)g}{W^{1,q}}
&\le Ct^{-1/2}\norm{g}{L^q},
\label{eq:semi-Lq}\\
\norm{S(t)g}{W^{1,q}}
&\le Ct^{-\theta}\norm{g}{L^{q/2}},
\qquad
\theta:=\frac12+\frac{d}{2q}<1.
\label{eq:semi-Lqh}
\end{align}
In addition \(S(t)\) is strongly continuous and locally bounded on \(W^{1,q}\):
\begin{equation}
\label{eq:semi-W1qbounded}
\sup_{0\le t\le1}
\norm{S(t)g}{W^{1,q}}
\le C\norm{g}{W^{1,q}}.
\end{equation}

Set
\[
X_T:=C([0,T];W^{1,q}(\Omega)),
\qquad
\norm{w}{X_T}:=\sup_{0\le t\le T}\norm{w(t)}{W^{1,q}},
\]
and, for \(R>0\),
\[
B_R:=\{w\in X_T:\norm{w}{X_T}\le R\}.
\]
Define \(\Gamma:X_T\to X_T\) by the right-hand side of \eqref{eq:mild}.

\subsubsection{The self-map estimate}

Let \(w\in B_R\). By Lemma~\ref{lem:elliptic},
\[
\norm{\nabla\Phi[w]}{L^\infty}\le C\norm{w}{L^q}\le CR.
\]
Thus
\begin{equation}
\label{eq:transport-bound}
\norm{\nabla\Phi[w]\cdot\nabla w}{L^q}
\le CR^2.
\end{equation}
Moreover,
\begin{equation}
\label{eq:square-bound}
\norm{|\nabla w|^2}{L^{q/2}}
=
\norm{\nabla w}{L^q}^2
\le R^2,
\end{equation}
and
\begin{equation}
\label{eq:linear-bound}
\norm{w-\bar w}{L^q}\le CR.
\end{equation}

Using \eqref{eq:semi-W1qbounded}, \eqref{eq:semi-Lq},
\eqref{eq:semi-Lqh}, and \(T\le1\), we obtain
\begin{align}
\label{eq:selfmap}
\norm{\Gamma(w)}{X_T}
&\le
C\norm{\omega_0}{W^{1,q}}
+
CR^2\int_0^T(T-s)^{-1/2}\,ds
\nonumber\\
&\quad
+
CR^2\int_0^T(T-s)^{-\theta}\,ds
+
CR\int_0^T(T-s)^{-1/2}\,ds\nonumber\\
&\le
C\norm{\omega_0}{W^{1,q}}
+
C T^{1/2}(R+R^2)
+
C T^\delta R^2.
\end{align}
with 
\begin{equation}
\label{eq:delta}
\delta:=1-\theta
=\frac12-\frac{d}{2q}
=\frac{q-d}{2q}>0.
\end{equation}
The corresponding time-integrability calculation is recorded in
Appendix~\ref{app:semigroup}, cf. \eqref{eq:time-integral-theta}.
Set
\[
R=2C\norm{\omega_0}{W^{1,q}}+1.
\]
Then \(T>0\) can be chosen sufficiently small, depending only on \(R\) and the fixed
coefficients, so that \(\Gamma(B_R)\subset B_R\).

\subsubsection{The contraction estimate}

Let \(w_1,w_2\in B_R\) and \(z=w_1-w_2\). First,
\begin{align}
|\nabla w_1|^2-|\nabla w_2|^2
&=(\nabla w_1+\nabla w_2)\cdot\nabla z,
\end{align}
hence
\begin{equation}
\label{eq:square-diff}
\norm{|\nabla w_1|^2-|\nabla w_2|^2}{L^{q/2}}
\le 2R\norm{z}{W^{1,q}}.
\end{equation}

Next,
\begin{align}
&\nabla\Phi[w_1]\cdot\nabla w_1
-\nabla\Phi[w_2]\cdot\nabla w_2
\nonumber\\
&\qquad
=
\nabla(\Phi[w_1]-\Phi[w_2])\cdot\nabla w_1
+
\nabla\Phi[w_2]\cdot\nabla z.
\end{align}
Using Lemma~\ref{lem:elliptic},
\begin{equation}
\label{eq:transport-diff}
\norm{
\nabla\Phi[w_1]\cdot\nabla w_1
-\nabla\Phi[w_2]\cdot\nabla w_2
}{L^q}
\le CR\norm{z}{W^{1,q}}.
\end{equation}
Finally,
\begin{equation}
\label{eq:linear-diff}
\norm{
(w_1-\bar w_1)-(w_2-\bar w_2)
}{L^q}
\le C\norm{z}{W^{1,q}}.
\end{equation}

Therefore
\begin{equation}
\label{eq:contraction}
\norm{\Gamma(w_1)-\Gamma(w_2)}{X_T}
\le
C\Bigl[(1+R)T^{1/2}+RT^\delta\Bigr]
\norm{w_1-w_2}{X_T}.
\end{equation}
Shrinking \(T\) if necessary, we may assume
\[
C\Bigl[(1+R)T^{1/2}+RT^\delta\Bigr]<1.
\]
Hence \(\Gamma\) is a strict contraction on \(B_R\).

\begin{proposition}[Local well-posedness]
\label{prop:local}
Let \(q>\max\{d,2\}\) and \(\omega_0\in W^{1,q}(\Omega)\).  Then there exists
\[
T=T\!\left(\norm{\omega_0}{W^{1,q}};\Omega,p,q\right)>0
\]
such that \eqref{eq:system} admits a unique mild solution satisfying
\[
\omega\in C([0,T];W^{1,q}(\Omega)),
\qquad
\phi=\Phi[\omega]\in C([0,T];W^{2,q}(\Omega)).
\]
Moreover, on every common local existence interval, the solution depends
continuously on the initial datum in the \(W^{1,q}\)-topology.
\end{proposition}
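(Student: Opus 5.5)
Existence and uniqueness of the mild solution on \([0,T]\) follow by applying the Banach fixed point theorem on the closed ball \(B_R\subset X_T\), using the self-map estimate \eqref{eq:selfmap} and the contraction estimate \eqref{eq:contraction}. First, \(B_R\) is a closed subset of the Banach space \(X_T\), hence complete. Second, \(\Gamma\) indeed maps \(X_T\) into \(X_T\), i.e. \(t\mapsto\Gamma(w)(t)\) is continuous into \(W^{1,q}\). For the free term this is the strong continuity of \(S(t)\) on \(W^{1,q}\). For the Duhamel terms I would use the standard argument: write the difference at \(t+h\) and \(t\) as an integral over \([t,t+h]\) plus \(\int_0^t (S(t+h-s)-S(t-s))F(s)\,ds\). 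The first piece is \(O(h^{\delta})\) by \eqref{eq:semi-Lq}--\eqref{eq:semi-Lqh}. The second tends to zero by dominated convergence, using the integrable majorants \((t-s)^{-1/2}\) and \((t-s)^{-\theta}\) together with strong continuity of the semigroup on \(L^q\) smoothing data. The forcings \(F(s)\) lie in \(L^\infty(0,T;L^q)\) or \(L^\infty(0,T;L^{q/2})\) by \eqref{eq:transport-bound}--\eqref{eq:linear-bound}. With \(R=2C\norm{\omega_0}{W^{1,q}}+1\) and \(T\) small as chosen, we get \(\Gamma(B_R)\subset B_R\) and a Lipschitz constant \(<1\). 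This gives a unique fixed point \(\omega\in B_R\), and setting \(\phi=\Phi[\omega]\), Lemma~\ref{lem:elliptic} gives \(\phi\in C([0,T];W^{2,q})\) from \eqref{eq:ell-difference}, since \(\omega\in C([0,T];L^q)\).

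Uniqueness in the full class \(C([0,T];W^{1,q})\), not only in \(B_R\), needs a separate step. Given two mild solutions \(\omega_1,\omega_2\) on \([0,T]\), set \(R'=\max_i\norm{\omega_i}{X_T}\), which is finite by continuity on a compact interval. The contraction estimate with \(R'\) then holds on \([0,\tau]\) for some small \(\tau\) depending only on \(R'\), forcing \(\omega_1=\omega_2\) there. Since \(\tau\) is independent of the starting time (the estimates are translation invariant in time), iterating over \([k\tau,(k+1)\tau]\) gives coincidence on all of \([0,T]\).

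For continuous dependence, take \(\omega_0,\tilde\omega_0\) with mild solutions \(\omega,\tilde\omega\) on a common interval \([0,T']\), and let \(R'\) bound both in \(X_{T'}\). Subtracting the mild formulations and using \eqref{eq:semi-W1qbounded}, \eqref{eq:square-diff}, \eqref{eq:transport-diff} and \eqref{eq:linear-diff} gives, on \([0,\tau]\),
\[
\norm{\omega-\tilde\omega}{X_\tau}\le C\norm{\omega_0-\tilde\omega_0}{W^{1,q}}+C\bigl[(1+R')\tau^{1/2}+R'\tau^\delta\bigr]\norm{\omega-\tilde\omega}{X_\tau}.
\]
For \(\tau\) small the last coefficient is \(\le 1/2\), so \(\norm{\omega-\tilde\omega}{X_\tau}\le 2C\norm{\omega_0-\tilde\omega_0}{W^{1,q}}\). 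Restarting at \(\tau,2\tau,\dots\) gives \(\norm{\omega-\tilde\omega}{X_{T'}}\le (2C)^{N}\norm{\omega_0-\tilde\omega_0}{W^{1,q}}\) with \(N\approx T'/\tau\), i.e. Lipschitz dependence.

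The main obstacle I expect is the time-continuity of the Duhamel term driven by \(|\nabla w|^2\in L^{q/2}\). The semigroup is not strongly continuous from \(L^{q/2}\) into \(W^{1,q}\) at \(s=t\). To handle it I would first treat smooth forcings and then conclude by density, using the uniform bound \(C T^\delta\) supplied by \eqref{eq:time-integral-theta}.
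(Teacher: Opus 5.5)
Your proposal is correct and follows the paper's own argument: Banach's fixed-point theorem on \(B_R\) via \eqref{eq:selfmap} and \eqref{eq:contraction}, time continuity from the integrable kernels together with strong continuity of \(S(t)\) on \(W^{1,q}\), \(\phi\in C([0,T];W^{2,q})\) from \eqref{eq:ell-difference}, and continuous dependence from the contraction estimate on a common ball. Your extra steps fill in details the paper leaves implicit: the explicit continuity of the Duhamel terms, uniqueness in all of \(C([0,T];W^{1,q})\) rather than only in \(B_R\) (which the later continuation lemma actually needs), and the restarted Lipschitz bound. One caveat, also implicit in the paper: for continuity at a fixed \(\omega_0\), the bound \(R'\) must be uniform over nearby data. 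On the interval furnished by the proposition this is automatic, since such data have \(W^{1,q}\)-norm at most \(\norm{\omega_0}{W^{1,q}}+1\).
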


\begin{proof}
The estimates \eqref{eq:selfmap} and \eqref{eq:contraction} show that, after
choosing \(R\) as above and taking \(T>0\) sufficiently small, the map
\(\Gamma\) maps \(B_R\) into itself and is a strict contraction.  Banach's
fixed-point theorem therefore yields a unique fixed point
\(\omega\in B_R\), which is the mild solution of \eqref{eq:omega-reduced}.

The kernels \((t-s)^{-1/2}\) and \((t-s)^{-\theta}\) are integrable because
\(\theta<1\).  Hence the Duhamel terms are continuous at \(t=0\), and the strong
continuity of \(S(t)\) on \(W^{1,q}\) gives
\(\omega\in C([0,T];W^{1,q})\).  Lemma~\ref{lem:elliptic} then implies
\[
\phi=\Phi[\omega]\in C([0,T];W^{2,q}).
\]
Finally, the contraction estimate applied on a common ball containing the
solutions associated with two nearby initial data yields continuous dependence in
\(W^{1,q}\).
\end{proof}

\begin{remark}[Choice of the Sobolev exponent for smooth initial data]
\label{rem:large-q}
Proposition~\ref{prop:local} requires only \(q>\max\{d,2\}\).  In the proof of
Theorem~\ref{thm:main}, however, the initial datum satisfies
\(\omega_0\in C^{2+\alpha}(\overline\Omega)\), and hence belongs to
\(W^{1,q}(\Omega)\) for every finite \(q\).  We may therefore fix
\[
q>2(d+2).
\]
This stronger choice is used only in the positive-time regularity argument below.
it is not needed for the contraction argument itself.
\end{remark}

\begin{lemma}[Positive-time classical regularity]
\label{lem:positive-time-classical}
Fix \(q\) as in Remark~\ref{rem:large-q}, and let
\[
\omega\in C([0,T];W^{1,q}(\Omega)),
\qquad
\phi=\Phi[\omega],
\]
be the mild solution obtained in Proposition~\ref{prop:local}.  Then, for every
\(0<\tau<T\), there exists \(0<\beta<\alpha\) such that
\[
\omega\in
C^{1+\beta/2,\,2+\beta}
([\tau,T]\times\overline\Omega),
\qquad
\phi\in
C^{\beta/2}\!\left([\tau,T];C^{2+\beta}(\overline\Omega)\right).
\]
In particular, \((\omega,\phi)\) satisfies \eqref{eq:system} pointwise on every
compact subinterval of \((0,T]\).
\end{lemma}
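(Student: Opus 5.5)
The plan is a bootstrap that treats the equation for \(\omega\) as a \emph{linear} parabolic problem with coefficients built from the known mild solution. The strategy is: first gain Hölder regularity of \(\omega\) in space--time from the \(W^{1,q}\) information, then upgrade to \(L^q\)-parabolic regularity, and finally apply Schauder theory. Fix \(0<\tau<T\) and a cutoff in time \(\eta\in C^\infty(\mathbb{R})\) with \(\eta=0\) on \((-\infty,\tau/2]\) and \(\eta=1\) on \([\tau,\infty)\).

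\textbf{Step 1 (Hölder continuity in time of \(\omega\) and \(\phi\)).} From the mild formulation \eqref{eq:mild}, I will show \(\omega\in C^{\gamma}([\tau/4,T];W^{1,q})\) for some \(\gamma>0\). The tools are \eqref{eq:semi-Lq}--\eqref{eq:semi-Lqh} together with the standard estimates \(\norm{(S(h)-I)S(t)g}{W^{1,q}}\le Ch^{\gamma}t^{-1/2-\gamma}\norm{g}{L^q}\) and the analogous estimate with \(L^{q/2}\) data and exponent \(\theta+\gamma<1\). These are the semigroup estimates of Appendix~\ref{app:semigroup}, applied as for analytic semigroups. The nonlinearities are bounded in \(L^q\) and \(L^{q/2}\) uniformly in time by \eqref{eq:transport-bound}--\eqref{eq:linear-bound}. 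By Lemma~\ref{lem:elliptic} (linearity of \(\Phi\) and \eqref{eq:ellW2q}), \(\phi\in C^\gamma([\tau/4,T];W^{2,q})\). In particular \(\nabla\phi\in C^{\gamma}_tC^{1-d/q}_x\), since \(W^{2,q}\hookrightarrow C^{1+(1-d/q)}\). Similarly \(\omega\in C^{\gamma}_tC^{1-d/q}_x\) by Morrey.

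\textbf{Step 2 (\(W^{2,1}_q\) regularity, then Hölder gradient).} Set \(v=\eta\omega\). It solves the linear Neumann problem
\[
\partial_tv-\Lp v=F:=\eta'\omega-\eta\bigl(\nabla\phi\cdot\nabla\omega+|\nabla\omega|^2+\omega-\bar\omega\bigr),\qquad v(0)=0,\qquad \partial_n v=0.
\]
Here \(F\in L^{q/2}((0,T)\times\Omega)\). Maximal \(L^{q/2}\)-regularity for the Neumann problem gives \(v\in W^{2,1}_{q/2}\). Since \(q/2>d+2\) by Remark~\ref{rem:large-q}, the parabolic embedding \(W^{2,1}_{q/2}\hookrightarrow C^{(1+\beta_0)/2,\,1+\beta_0}\) holds with \(\beta_0=1-2(d+2)/q>0\). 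This is exactly where the choice \(q>2(d+2)\) enters. Hence \(\nabla\omega\) is Hölder continuous in \((t,x)\) on \([\tau,T]\times\overline\Omega\).

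\textbf{Step 3 (Schauder upgrade).} Shrink the cutoff region slightly and choose \(\beta<\min\{\alpha,\beta_0,2\gamma,1-d/q\}\). Then \(F\in C^{\beta/2,\beta}\): the terms \(\nabla\phi\), \(\nabla\omega\) and \(\omega\) are parabolically Hölder by Steps 1--2, and \(\bar\omega(t)\) is \(C^{\gamma}\) in time by its linear dependence on \(\omega\). The operator \(\Lp\) has \(C^{1+\alpha}\) coefficients, \(\partial\Omega\in C^{2+\alpha}\), and the data are zero with homogeneous Neumann condition, so the compatibility conditions are trivially satisfied. Parabolic Schauder theory for the Neumann problem then gives \(v\in C^{1+\beta/2,2+\beta}\), hence \(\omega\in C^{1+\beta/2,2+\beta}([\tau,T]\times\overline\Omega)\).

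Next apply elliptic Schauder estimates to \eqref{eq:elliptic-Phi} at each time and to time differences. By linearity, \(\norm{\phi(t)-\phi(s)}{C^{2+\beta}}\le C\norm{\omega(t)-\omega(s)}{C^\beta}\le C|t-s|^{\beta/2}\), which gives \(\phi\in C^{\beta/2}([\tau,T];C^{2+\beta})\). Because \(\omega\) is now classical, the mild solution satisfies \eqref{eq:system} pointwise: the strong solution of the linear problem coincides with the mild one by uniqueness for the linear equation.

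\textbf{Main obstacle.} I expect Step 1 to be the delicate point, namely obtaining time-Hölder continuity in \(W^{1,q}\) with the singular \(L^{q/2}\) source. The worry is that the exponents \(\theta+\gamma\) stay below \(1\) and that the initial-layer term \(S(t)\omega_0\) is only controlled away from \(t=0\), which is why I restrict to \(t\ge\tau/4\). If those semigroup estimates turn out to be awkward, the fallback is to skip Step 1 for \(\omega\). In that version I would get \(W^{2,1}_{q/2}\) directly in Step 2 using only boundedness of \(F\), and then read off time-Hölder continuity of \(\omega\) in \(C^{1+\beta_0}\) from the embedding. Time-Hölder continuity of \(\phi\) would then follow from Lemma~\ref{lem:elliptic}.
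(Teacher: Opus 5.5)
Your proposal is correct and follows the paper's own argument: a time cutoff, maximal $L^{q/2}$-regularity using $q/2>d+2$, and the parabolic Sobolev--Morrey embedding, followed by elliptic Schauder estimates for $\phi$ (at fixed times and for time differences) and a boundary parabolic Schauder estimate. Your Step 1 is unnecessary, because the paper, exactly as in your fallback, reads the needed time regularity off $\omega\in C^{(1+\gamma)/2,\,1+\gamma}$ and interpolates to $\omega\in C^{\beta/2}([\tau,T];C^{\beta}(\overline\Omega))$, then passes this to $\phi$ by linearity of the elliptic solution operator (the embedding gives time-H\"older continuity with values in $C^{1+\beta'}$ for $\beta'<\beta_0$, not in $C^{1+\beta_0}$ as your fallback states, though this does not affect the argument).
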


\begin{proof}
Set \(r=q/2\), by Remark~\ref{rem:large-q}, \(r>d+2\).  Write
\[
\partial_t\omega=L_p\omega+\mathcal N(\omega),
\]
where
\[
\mathcal N(\omega)
:=
-\nabla\Phi[\omega]\cdot\nabla\omega
-|\nabla\omega|^2
-(\omega-\bar\omega).
\]
The regularity
\[
\omega\in C([0,T];W^{1,q}(\Omega))
\]
and Lemma~\ref{lem:elliptic} imply
\[
\Phi[\omega]\in C([0,T];W^{2,q}(\Omega)),
\qquad
\nabla\Phi[\omega]\in C([0,T];L^\infty(\Omega)).
\]
Consequently,
\[
\nabla\Phi[\omega]\cdot\nabla\omega\in C([0,T];L^q(\Omega)),
\]
\[
|\nabla\omega|^2\in C([0,T];L^{q/2}(\Omega))
=C([0,T];L^r(\Omega)),
\]
and
\[
\omega-\bar\omega\in C([0,T];L^q(\Omega)).
\]
Since \(\Omega\) is bounded and \(r=q/2<q\),
\begin{equation}
\mathcal N(\omega)\in C([0,T];L^r(\Omega)).
\label{eq:positive-time-nonlinearity}
\end{equation}

Fix \(0<\tau<T\), and choose \(\chi\in C^\infty([0,T])\) such that
\[
\chi=0\quad\text{on }[0,\tau/3],
\qquad
\chi=1\quad\text{on }[2\tau/3,T].
\]
For \(v:=\chi\omega\), the variation-of-constants formula gives
\begin{equation}
\left\{
\begin{aligned}
v_t-L_pv
&=\chi\,\mathcal N(\omega)+\chi'\omega,\\
\partial_n v&=0,\\
v(0)&=0.
\end{aligned}
\right.
\label{eq:positive-time-cutoff-problem}
\end{equation}
By \eqref{eq:positive-time-nonlinearity} and the boundedness of \(\omega\) in
\(C([0,T];W^{1,q})\), the right-hand side belongs to
\(L^r(0,T;L^r(\Omega))\).  Maximal \(L^r\)-regularity for the Neumann realization
of \(L_p\) therefore yields
\begin{equation}
v\in
W^{1,r}(0,T;L^r(\Omega))
\cap
L^r(0,T;W^{2,r}(\Omega)).
\label{eq:positive-time-maximal-regularity}
\end{equation}
Since \(v=\omega\) on \([2\tau/3,T]\), the same regularity holds for \(\omega\)
on \([\tau,T]\), after restriction from a slightly larger positive-time interval.

Because \(r>d+2\), parabolic Sobolev--Morrey embedding gives
\begin{equation}
\omega\in
C^{(1+\gamma)/2,\,1+\gamma}
([\tau,T]\times\overline\Omega),
\qquad
\gamma:=1-\frac{d+2}{r}>0.
\label{eq:positive-time-morrey}
\end{equation}
Choose \(0<\beta<\min\{\alpha,\gamma\}\).  After decreasing \(\beta\), if
necessary,
\begin{equation}
\omega,\ \nabla\omega
\in C^{\beta/2,\,\beta}
([\tau,T]\times\overline\Omega).
\label{eq:positive-time-holder}
\end{equation}

For each \(t\), elliptic Schauder estimates for
\[
L_p\phi(t)=-(\omega(t)-\bar\omega(t)),
\qquad
\partial_n\phi(t)=0,
\qquad
\int_\Omega\phi(t)\,dx=0
\]
give
\[
\norm{\phi(t)}{C^{2+\beta}}
\le C\norm{\omega(t)-\bar\omega(t)}{C^\beta}.
\]
Applying the same estimate to \(\phi(t)-\phi(s)\) and using
\eqref{eq:positive-time-morrey}--\eqref{eq:positive-time-holder}, possibly after
decreasing \(\beta\) once more, yields
\begin{equation}
\phi\in
C^{\beta/2}\!\left([\tau,T];C^{2+\beta}(\overline\Omega)\right).
\label{eq:positive-time-phi-holder}
\end{equation}
Consequently,
\begin{equation}
\mathcal N(\omega)\in
C^{\beta/2,\,\beta}([\tau,T]\times\overline\Omega).
\label{eq:positive-time-nonlinearity-holder}
\end{equation}

Applying the boundary parabolic Schauder estimate to
\[
\omega_t-L_p\omega=\mathcal N(\omega),
\qquad
\partial_n\omega=0,
\]
on a positive-time cylinder beginning strictly before \(\tau\), and then
restricting to \([\tau,T]\), gives
\[
\omega\in
C^{1+\beta/2,\,2+\beta}
([\tau,T]\times\overline\Omega).
\]
Together with \eqref{eq:positive-time-phi-holder}, this proves the assertion.
\end{proof}

\section{Global well-posedness in $W^{1,q}$}

The local existence time in Proposition~\ref{prop:local} depends only on a bound for the current \(W^{1,q}\)-norm.  This yields the following standard continuation criterion.

\begin{lemma}[Continuation criterion]
\label{lem:continuation}
Let
\[
\omega\in C([0,T_{\max});W^{1,q}(\Omega))
\]
be the maximal mild solution obtained from Proposition~\ref{prop:local}.  If
\(T_{\max}<\infty\), then
\[
\limsup_{t\uparrow T_{\max}}
\norm{\omega(t)}{W^{1,q}}
=\infty.
\]
Equivalently, a uniform \(W^{1,q}\)-bound on the maximal existence interval
precludes finite-time breakdown.
\end{lemma}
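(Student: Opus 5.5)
We argue by contradiction, using that the local existence time in Proposition~\ref{prop:local} depends on the initial datum only through an upper bound for its \(W^{1,q}\)-norm. Assume \(T_{\max}<\infty\) but
\[
M:=\sup_{0\le t<T_{\max}}\norm{\omega(t)}{W^{1,q}}<\infty .
\]
This is exactly the negation of the claim: if \(\limsup_{t\uparrow T_{\max}}\norm{\omega(t)}{W^{1,q}}<\infty\), then continuity of \(\omega\) on every compact subinterval \([0,T_{\max}-\epsilon]\) makes the supremum over all of \([0,T_{\max})\) finite.

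First we make the uniformity explicit. Inspecting \eqref{eq:selfmap} and \eqref{eq:contraction}, the radius is \(R=2C\norm{\omega_0}{W^{1,q}}+1\), and \(T\) only has to satisfy
\[
CT^{1/2}(R+R^2)+CT^\delta R^2\le R/2,
\qquad
C\bigl[(1+R)T^{1/2}+RT^\delta\bigr]<1 .
\]
The left-hand sides increase with \(R\). Hence there is \(T_*=T_*(M;\Omega,p,q)>0\) such that, for every datum \(\tilde\omega_0\) with \(\norm{\tilde\omega_0}{W^{1,q}}\le M\), the map \(\Gamma\) built from \(\tilde\omega_0\) is a contraction on \(B_{2CM+1}\) over \([0,T_*]\). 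We may take \(T_*\le1\), which is consistent with the restriction \(T\le 1\) used for the semigroup bounds.

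Next we pick a restart time \(t_0:=T_{\max}-T_*/2>0\); if \(T_{\max}\le T_*/2\), a single application from time \(0\) already gives a contradiction. Then \(\norm{\omega(t_0)}{W^{1,q}}\le M\), so Proposition~\ref{prop:local} produces a mild solution \(\tilde\omega\in C([0,T_*];W^{1,q})\) with initial datum \(\omega(t_0)\). The system \eqref{eq:system} is autonomous, since \(p\) does not depend on \(t\). Together with the semigroup property \(S(t+s)=S(t)S(s)\), this shows that the concatenation
\[
\hat\omega(t)=
\begin{cases}
\omega(t), & t\le t_0,\\
\tilde\omega(t-t_0), & t_0\le t\le t_0+T_*,
\end{cases}
\]
satisfies the Duhamel formula \eqref{eq:mild} on \([0,t_0+T_*]\). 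To check this, split \(\int_0^t\) into \(\int_0^{t_0}\) and \(\int_{t_0}^t\), and factor \(S(t-t_0)\) out of the first piece. The result is \(S(t-t_0)\omega(t_0)\) plus the Duhamel terms of \(\tilde\omega\).

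Finally, uniqueness of mild solutions on overlapping intervals (the contraction in \(C([0,T];W^{1,q})\), applied on any short interval with a ball large enough to contain both solutions) shows that \(\hat\omega\) agrees with \(\omega\) on \([0,T_{\max})\). Thus \(\hat\omega\) is a continuation to \(t_0+T_*=T_{\max}+T_*/2>T_{\max}\), contradicting maximality. The equivalent formulation follows at once.

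The main obstacle is bookkeeping rather than analysis. It consists of verifying that \(T\) in Proposition~\ref{prop:local} depends only on an upper bound for the norm, and that uniqueness holds among all mild solutions in \(X_T\), not only within the ball \(B_R\). For the latter we use that any two continuous solutions are bounded in \(X_T\) by some common \(R'\). The contraction estimate with \(R'\) on a short interval then forces them to agree, and we propagate this agreement forward step by step.
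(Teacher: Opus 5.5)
Your proposal is correct and follows the same route as the paper. Assuming a uniform $W^{1,q}$-bound, you restart from a time close to $T_{\max}$, use that the local existence time in Proposition~\ref{prop:local} depends only on a bound for the norm of the datum, and use local uniqueness to identify the restarted solution with the original one, which contradicts maximality. You add bookkeeping the paper leaves implicit: the explicit uniform time $T_*$, the concatenation check via $S(t+s)=S(t)S(s)$, and uniqueness among all mild solutions rather than only within the ball $B_R$.
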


\begin{proof}
Assume, to the contrary, that \(T_{\max}<\infty\) and
\[
\sup_{0\le t<T_{\max}}
\norm{\omega(t)}{W^{1,q}}
\le R_*<\infty.
\]
Choose \(t_n\uparrow T_{\max}\).  By time translation and
Proposition~\ref{prop:local}, the problem with initial datum \(\omega(t_n)\)
admits a mild solution on an interval of length
\(\tau=\tau(R_*)>0\), independently of \(n\).  Local uniqueness identifies this
restarted solution with the original one on their common interval.  For
\(n\) sufficiently large, \(t_n+\tau>T_{\max}\), so the restarted solution
extends \(\omega\) beyond \(T_{\max}\), contradicting maximality.
\end{proof}

Accordingly, global continuation reduces to a finite-time
\(W^{1,q}\) a priori estimate.  The first ingredient is a maximum-principle bound
that is independent of the Sobolev norm.

\subsection{Maximum principle}

Let \(T_{\max}\) denote the maximal existence time obtained by iteration of
Proposition~\ref{prop:local}, with the exponent fixed as in
Remark~\ref{rem:large-q}.  We derive estimates on \([0,T_{\max})\) that depend only
on the initial range of \(\omega\) and the fixed coefficients.

For subsequent use, define
\[
m_0:=\min_{\overline\Omega}\omega_0,
\qquad
M_0:=\max_{\overline\Omega}\omega_0.
\]

\begin{lemma}[Maximum principle]
\label{lem:maximum-principle}
Let
\[
\omega\in C([0,T_{\max});W^{1,q}(\Omega))
\]
be the maximal mild solution obtained in Proposition~\ref{prop:local}, with \(q\)
chosen as in Remark~\ref{rem:large-q}.  Then
\begin{equation}
\label{eq:invariant}
m_0\le\omega(t,x)\le M_0,
\qquad
0\le t<T_{\max},\quad x\in\overline\Omega.
\end{equation}
\end{lemma}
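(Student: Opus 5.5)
We plan to argue pointwise, via the upper Dini derivative of \(M(t):=\max_{\overline\Omega}\omega(t,\cdot)\) and the lower one of \(m(t):=\min_{\overline\Omega}\omega(t,\cdot)\). This uses the positive-time classical regularity of Lemma~\ref{lem:positive-time-classical} together with the Dini-derivative tools collected in Appendix~\ref{app:dini}. Since \(q>d\), Morrey's embedding \(W^{1,q}\hookrightarrow C(\overline\Omega)\) and \(\omega\in C([0,T_{\max});W^{1,q})\) make \(M\) and \(m\) continuous on \([0,T_{\max})\), with \(M(0)=M_0\) and \(m(0)=m_0\). It therefore suffices to show that \(M\) is nonincreasing and \(m\) is nondecreasing on every interval \([\tau,T]\subset(0,T_{\max})\), and then let \(\tau\downarrow0\).

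\emph{Step 1: the derivative at a maximum point.} Fix \(t\in[\tau,T]\) and let \(x_t\) be a maximum point of \(\omega(t,\cdot)\). By the standard Danskin-type argument (Appendix~\ref{app:dini}), using that \(\omega\in C^{1+\beta/2,2+\beta}\),
\[
D^+M(t)\le \max_{x\in\operatorname{argmax}\omega(t,\cdot)}\partial_t\omega(t,x).
\]
At such a point \(\nabla\omega(t,x_t)=0\). If \(x_t\) is interior, this follows from first-order conditions. If \(x_t\in\partial\Omega\), the tangential derivatives vanish because \(x_t\) maximizes the restriction to the boundary, and the normal derivative vanishes by the Neumann condition. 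Hence the transport term \(\nabla\phi\cdot\nabla\omega\) and the Hamiltonian term \(|\nabla\omega|^2\) both vanish at \(x_t\), which is the key point: neither the nonlocal velocity nor the quadratic gradient term can raise the maximum. The equation then reduces to
\[
\partial_t\omega(t,x_t)=\Delta\omega(t,x_t)-\bigl(\omega(t,x_t)-\bar\omega(t)\bigr).
\]

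\emph{Step 2: signs.} Since \(\bar\omega(t)\) is a \(p\)-weighted average of \(\omega(t,\cdot)\) with \(p>0\), we have \(\bar\omega(t)\le M(t)\), so \(-(M(t)-\bar\omega(t))\le0\). For an interior maximum, \(\Delta\omega\le0\). This gives \(D^+M(t)\le0\) on \([\tau,T]\), hence \(M\) is nonincreasing there. The same argument applied to \(-\omega\), using \(\bar\omega\ge m(t)\), shows that \(m\) is nondecreasing.

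\emph{Main obstacle: a boundary maximum.} The difficult case is a maximum attained on \(\partial\Omega\), where \(\Delta\omega\le0\) is not automatic even though \(\partial_n\omega=0\). We would first try to avoid it by a Hopf-lemma contradiction argument. Suppose \(M(t_1)>M(t_0)+\varepsilon\) for some \(t_0<t_1\) in \([\tau,T]\). Consider \(\tilde\omega=\omega-\varepsilon' t\), so that \(\tilde\omega\) is a strict subsolution near a first time its maximum exceeds \(M(t_0)\). If the maximum is attained on the boundary, the parabolic Hopf lemma for \(\partial_t-L_p\) plus bounded lower-order terms (with coefficients \(\nabla\phi+\nabla\omega\), bounded on \([\tau,T]\)) forces \(\partial_n\omega>0\), contradicting the Neumann condition. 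If it is attained in the interior, Step 2 applies. Should the Hopf route be awkward, the fallback is a Stampacchia estimate: test the equation with \(p\,(\omega-k)_+\) for \(k=M(t_0)\) and integrate the transport term by parts using \(L_p\phi=-(\omega-\bar\omega)\). The resulting terms are bounded by \(C\norm{\omega}{L^\infty}\int p(\omega-k)_+^2\), where the \(L^\infty\) bound is finite on \([\tau,T]\) by classical regularity, while \(-(\omega-\bar\omega)(\omega-k)_+\le0\) as long as \(\bar\omega\le k\). A Gr\"onwall argument then yields \((\omega-k)_+\equiv0\), and a continuity argument in \(t\) closes the circularity in the condition \(\bar\omega\le k\).
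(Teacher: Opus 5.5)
Your proof is correct, and its skeleton is exactly the paper's: continuity of $M,m$ via $W^{1,q}\hookrightarrow C(\overline\Omega)$, reduction to $[\tau,T]$ where Lemma~\ref{lem:positive-time-classical} gives classical regularity, the Dini bound of Lemma~\ref{lem:Dini-max}, $m\le\bar\omega\le M$, and $\tau\downarrow0$. You diverge only at the ``main obstacle'', and that obstacle is not real. In Step~1 you already showed that the \emph{full} gradient vanishes at a boundary maximum $x_t$. Let $\nu$ be the inward unit normal there. For any $v$ with $v\cdot\nu>0$, the point $x_t+sv$ lies in $\Omega$ for small $s>0$. Since $\omega(t,\cdot)\in C^{2+\beta}(\overline\Omega)$ and $s\mapsto\omega(t,x_t+sv)$ has zero first derivative at $s=0$, one-sided Taylor expansion gives $D^2\omega(t,x_t)(v,v)\le0$. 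By continuity in $v$ and the symmetry $v\mapsto-v$, the Hessian is negative semidefinite, so $L_p\omega(t,x_t)=\Delta\omega(t,x_t)\le0$. Put differently, along a boundary curve $\gamma$ the second derivative is $D^2\omega(\gamma',\gamma')+\nabla\omega\cdot\gamma''$. The curvature term, which is what normally spoils boundary maxima, is killed by $\nabla\omega=0$. Appendix~\ref{app:dini} treats boundary extrema exactly this way, so your Steps~1--2 already finish the proof with no case distinction.

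Your two backup routes are valid but do more work. The Hopf route yields nothing beyond the second-derivative test. It needs the strict-subsolution perturbation $\omega-\varepsilon't$, the freezing of $|\nabla\omega|^2$ as the drift $(\nabla\phi+\nabla\omega)\cdot\nabla\omega$, and a parabolic Hopf lemma at the top of the cylinder. The Stampacchia route is genuinely different and does buy something. It uses no pointwise evaluation or Dini calculus, so it would run for strong solutions, e.g.\ in the maximal-regularity class $W^{1,r}(0,T;L^r)\cap L^r(0,T;W^{2,r})$, without the Schauder upgrade. You can also drop the continuity argument for $\bar\omega\le k$. For any nondecreasing $f$, the quantity $\int_\Omega p(\omega-\bar\omega)f(\omega)\,dx$ is a $p$-weighted covariance of $\omega$ and $f(\omega)$, hence nonnegative. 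Taking $f=(\cdot-k)_+$ and $f=\frac12(\cdot-k)_+^2$ shows that the zero-order term and the integrated-by-parts transport term are both nonpositive. Hence $\int_\Omega p(\omega-k)_+^2\,dx$ is nonincreasing for every $k$, and no Gr\"onwall step is needed.
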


\begin{proof}
Fix \(0<\tau<T<T_{\max}\).  By
Lemma~\ref{lem:positive-time-classical}, \(\omega\) is classical on
\([\tau,T]\times\overline\Omega\).  Define
\[
M(t):=\max_{\overline\Omega}\omega(t,\cdot),
\qquad
m(t):=\min_{\overline\Omega}\omega(t,\cdot).
\]
At a spatial maximum \(x_M\) of \(\omega(t,\cdot)\), the Neumann condition and
the second-derivative test give
\[
\nabla\omega(t,x_M)=0,
\qquad
L_p\omega(t,x_M)\le0.
\]
Thus \eqref{eq:omega-reduced} implies
\[
\partial_t\omega(t,x_M)
\le -(M(t)-\bar\omega(t)).
\]
Similarly, at a spatial minimum \(x_m\),
\[
\nabla\omega(t,x_m)=0,
\qquad
L_p\omega(t,x_m)\ge0,
\]
and hence
\[
\partial_t\omega(t,x_m)
\ge \bar\omega(t)-m(t).
\]
The Dini-derivative formulas collected in Lemma~\ref{lem:Dini-max} of
Appendix~\ref{app:dini} yield
\[
D^+M(t)\le -(M(t)-\bar\omega(t)),
\qquad
D_+m(t)\ge \bar\omega(t)-m(t).
\]
Since \(p>0\),
\[
m(t)\le\bar\omega(t)\le M(t),
\]
and therefore
\[
D^+M(t)\le0,
\qquad
D_+m(t)\ge0.
\]
The Dini monotonicity criterion shows that \(M\) is nonincreasing and \(m\) is
nondecreasing on \([\tau,T]\).  Consequently,
\begin{equation}
m(\tau)\le\omega(t,x)\le M(\tau),
\qquad
\tau\le t\le T.
\label{eq:maximum-principle-tau}
\end{equation}

Because \(q>d\), the embedding
\[
W^{1,q}(\Omega)\hookrightarrow C(\overline\Omega)
\]
is continuous.  Hence
\[
\omega(t)\to\omega_0
\qquad\text{uniformly on }\overline\Omega
\quad\text{as }t\downarrow0,
\]
and thus
\[
M(\tau)\to M_0,
\qquad
m(\tau)\to m_0.
\]
Letting \(\tau\downarrow0\) in \eqref{eq:maximum-principle-tau} proves
\eqref{eq:invariant} on \([0,T]\).  Since \(T<T_{\max}\) is arbitrary, the
estimate holds throughout the maximal existence interval.
\end{proof}

Set
\[
K_0:=M_0-m_0.
\]
Then
\begin{equation}
\label{eq:meanosc}
\norm{\omega(t)-\bar\omega(t)}{L^\infty}\le K_0.
\end{equation}
Applying the elliptic estimate to
\[
L_p\phi=-(\omega-\bar\omega)
\]
gives, for every fixed \(r>d\),
\[
\norm{\phi(t)}{W^{2,r}}
\le C_r\norm{\omega(t)-\bar\omega(t)}{L^r}
\le C_rK_0.
\]
Hence Sobolev embedding yields the uniform bound
\begin{equation}
\label{eq:phi-grad-global}
\sup_{0\le t<T_{\max}}
\norm{\nabla\phi(t)}{L^\infty}
\le C_0,
\end{equation}
where \(C_0\) depends only on the initial oscillation and the fixed coefficients.

\subsection{Global \(W^{1,q}\) estimate}

A direct estimate of \eqref{eq:mild} in the variable \(\omega\) leads to a quadratic Volterra inequality of the schematic form
\[
Y(t)\lesssim C+\int_0^t(t-s)^{-\theta}Y(s)^2\,ds,
\qquad
Y(t)=\norm{\omega(t)}{W^{1,q}},
\]
which is sufficient for local theory but does not by itself rule out finite-time
growth.  It is therefore advantageous, at the level of a priori estimates, to introduce the exponential variable.

Define
\[
u:=e^{-\omega}.
\]
By \eqref{eq:invariant},
\begin{equation}
\label{eq:u-positive}
e^{-M_0}
\le u(t,x)\le
e^{-m_0}
\qquad (0\le t<T_{\max}).
\end{equation}
By Lemma~\ref{lem:positive-time-classical}, the following computation is valid
classically for every \(t>0\). Using
\[
\nabla u=-u\nabla\omega,
\qquad
L_pu=-uL_p\omega+u|\nabla\omega|^2,
\]
and the \(\omega\)-equation, the quadratic gradient term cancels and we obtain
\begin{equation}
\label{eq:u-equation}
u_t
=
L_pu
-\nabla\phi\cdot\nabla u
+(\omega-\bar\omega)u.
\end{equation}
The corresponding variation-of-constants formula holds first on
\([\tau,T]\), \(\tau>0\), and then extends to \(t=0\) by the continuity of
\(\omega\) in \(W^{1,q}\) and of \(u=e^{-\omega}\) in the same space. Thus
\begin{equation}
\label{eq:u-duhamel}
u(t)
=
S(t)u_0
+
\int_0^t
S(t-s)
\left[
-\nabla\phi(s)\cdot\nabla u(s)
+
(\omega(s)-\bar\omega(s))u(s)
\right]ds.
\end{equation}

By \eqref{eq:meanosc}, \eqref{eq:phi-grad-global}, and \eqref{eq:u-positive},
\begin{align}
&\norm{
-\nabla\phi(s)\cdot\nabla u(s)
+
(\omega(s)-\bar\omega(s))u(s)
}{L^q}
\nonumber\\
&\qquad
\le
C_0\norm{\nabla u(s)}{L^q}
+
K_0\norm{u(s)}{L^q}
\le C_1\norm{u(s)}{W^{1,q}}.
\label{eq:u-nonlinearity-Lq}
\end{align}
Using the boundedness of \(S(t)\) on \(W^{1,q}\) for the initial term and
\eqref{eq:semi-Lq} for the Duhamel term, we find, for \(0\le t\le T<T_{\max}\),
\begin{equation}
\label{eq:u-singular-gronwall}
Y_u(t)
\le A_T
+
B\int_0^t(t-s)^{-1/2}Y_u(s)\,ds,
\qquad
Y_u(t):=\norm{u(t)}{W^{1,q}},
\end{equation}
where \(A_T\) and \(B\) are finite. On each fixed finite interval, one may take
\(A_T=C\norm{u_0}{W^{1,q}}\) up to a harmless semigroup constant.

The singular Gr\"onwall inequality in Lemma~\ref{lem:singular-gronwall} of
Appendix~\ref{app:gronwall} gives
\begin{equation}
\label{eq:u-global-W1q}
\sup_{0\le t\le T}\norm{u(t)}{W^{1,q}}
\le C_T<\infty.
\end{equation}
Finally, by \eqref{eq:u-positive},
\[
\nabla\omega=-\frac{\nabla u}{u},
\]
so
\[
\norm{\nabla\omega(t)}{L^q}
\le e^{M_0}\norm{\nabla u(t)}{L^q}.
\]
Together with the \(L^\infty\) bound \eqref{eq:invariant},
\begin{equation}
\label{eq:omega-global-W1q}
\sup_{0\le t\le T}
\norm{\omega(t)}{W^{1,q}}
\le C_T<\infty
\qquad
\text{for every finite }T<T_{\max}.
\end{equation}
Lemma~\ref{lem:continuation} therefore implies
\[
T_{\max}=\infty.
\]

\begin{proposition}[Global Sobolev mild well-posedness]
\label{prop:global-mild}
Assume the hypotheses of Theorem~\ref{thm:main}, and fix \(q\) as in
Remark~\ref{rem:large-q}.  Then \eqref{eq:system} admits a unique global mild
solution
\[
\omega\in C([0,\infty);W^{1,q}(\Omega)),
\qquad
\phi\in C([0,\infty);W^{2,q}(\Omega)).
\]
For every \(T>0\),
\[
\sup_{0\le t\le T}
\left(
\norm{\omega(t)}{W^{1,q}}
+
\norm{\phi(t)}{W^{2,q}}
\right)<\infty,
\]
and \eqref{eq:invariant} holds for all \(t\ge0\).  Moreover, the solution depends
continuously on the initial datum on every finite time interval in the
\(W^{1,q}\)-topology.
\end{proposition}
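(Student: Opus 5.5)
The proof will assemble the pieces already established in this section. First, since \(\omega_0\in C^{2+\alpha}(\overline\Omega)\subset W^{1,q}(\Omega)\) for the fixed \(q>2(d+2)\) of Remark~\ref{rem:large-q}, Proposition~\ref{prop:local} provides a unique mild solution on some interval \([0,T_0]\). Iterating it, as in Lemma~\ref{lem:continuation}, gives a maximal mild solution \(\omega\in C([0,T_{\max});W^{1,q}(\Omega))\), which is unique because local uniqueness holds on every common interval. Lemma~\ref{lem:maximum-principle} then gives the invariant range \eqref{eq:invariant} on \([0,T_{\max})\). From this we obtain the bounds \eqref{eq:meanosc} and \eqref{eq:phi-grad-global}, which are uniform in time.

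The central step is the a priori bound \eqref{eq:omega-global-W1q}. We pass to \(u=e^{-\omega}\), which satisfies \eqref{eq:u-equation} with the quadratic gradient term cancelled. The Duhamel formula \eqref{eq:u-duhamel} is first justified on \([\tau,T]\) using the classical regularity of Lemma~\ref{lem:positive-time-classical}, and then extended to \(\tau\downarrow0\) by continuity in \(W^{1,q}\). Using \eqref{eq:semi-W1qbounded} and \eqref{eq:semi-Lq} on unit time blocks, this yields the linear singular Volterra inequality \eqref{eq:u-singular-gronwall}. Lemma~\ref{lem:singular-gronwall} then bounds \(\norm{u}{W^{1,q}}\) on \([0,T]\). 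The identity \(\nabla\omega=-\nabla u/u\), together with \(u\ge e^{-M_0}\), converts this into the bound \eqref{eq:omega-global-W1q} on \(\omega\). Lemma~\ref{lem:continuation} then forces \(T_{\max}=\infty\).

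I expect the main subtlety to be that the \(W^{1,q}\) bound must hold on all of \([0,T]\), not only for \(t\ge\tau\). The semigroup estimates \eqref{eq:semi-Lq} and \eqref{eq:semi-W1qbounded} are stated only for \(t\le1\). For \(T>1\), I would either restart the integral equation on consecutive unit intervals, using the bound at the end of each interval as new data, or use the semigroup property to extend the constants with growth \(e^{CT}\). Either way \(A_T\) and \(B\) stay finite on each finite interval, which is all the Gr\"onwall lemma needs.

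The remaining assertions are straightforward. The bound \(\sup_{[0,T]}\norm{\phi(t)}{W^{2,q}}<\infty\) follows from Lemma~\ref{lem:elliptic} applied to \(\phi=\Phi[\omega]\). Continuity \(\phi\in C([0,\infty);W^{2,q})\) follows from \eqref{eq:ell-difference} and the continuity of \(\omega\). Continuous dependence on any finite interval \([0,T]\) comes from chaining the local continuous dependence of Proposition~\ref{prop:local} over finitely many subintervals. Their lengths are bounded below uniformly, because the \(W^{1,q}\) bound \(C_T\) controls the local existence time. Nearby data have nearby \(W^{1,q}\)-norms, so after slightly enlarging the bound \(C_T\) the same partition serves for all data in a small neighbourhood.
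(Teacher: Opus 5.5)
Your proposal is correct and follows essentially the same route as the paper. The steps are the maximal mild solution from Proposition~\ref{prop:local}, the invariant range from Lemma~\ref{lem:maximum-principle}, and the linear Volterra inequality for \(u=e^{-\omega}\) closed by Lemma~\ref{lem:singular-gronwall}, followed by the continuation criterion, the elliptic estimate for \(\phi\), and finite chaining for uniqueness and continuous dependence. Your restart on unit time blocks (needed because \eqref{eq:semi-Lq} and \eqref{eq:semi-W1qbounded} are stated only for \(t\le1\)) and your uniform-partition argument for continuous dependence make explicit details that the paper passes over.
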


\begin{proof}
Proposition~\ref{prop:local} yields a unique maximal mild solution on
\([0,T_{\max})\).  Lemma~\ref{lem:maximum-principle} provides the invariant bound
\eqref{eq:invariant}, while the transformation \(u=e^{-\omega}\) and the singular
Gr\"onwall estimate yield \eqref{eq:omega-global-W1q}.  The continuation criterion
Lemma~\ref{lem:continuation} therefore excludes \(T_{\max}<\infty\), and hence
\(T_{\max}=\infty\).

The elliptic estimate \eqref{eq:ellW2q} applied to
\(\phi(t)=\Phi[\omega(t)]\) gives the asserted \(W^{2,q}\)-bound for \(\phi\)
on every finite time interval.  Global uniqueness follows from local uniqueness by
a finite continuation argument on \([0,T]\).  Iterating the local
continuous-dependence estimate in the same manner yields continuous dependence on
the initial datum.
\end{proof}

\section{Global classical well-posedness}

It remains to upgrade the global Sobolev mild solution to a classical solution up to
the initial time.  Positive-time regularity is provided by
Lemma~\ref{lem:positive-time-classical}, while the
\(C^{2+\alpha}\) regularity and Neumann compatibility of the initial datum allow
standard parabolic Schauder theory to be applied near \(t=0\).

\begin{theorem}[Classical regularity upgrade]
\label{thm:classical-upgrade}
Under the assumptions of Theorem~\ref{thm:main}, the global mild solution from
Proposition~\ref{prop:global-mild} is the unique global classical solution of
\eqref{eq:system}.  More precisely, for every \(T>0\) and every
\(0<\beta<\alpha\),
\[
\omega\in
C^{1+\beta/2,\,2+\beta}([0,T]\times\overline\Omega),
\qquad
\phi\in
C^{\beta/2}\!\left([0,T];C^{2+\beta}(\overline\Omega)\right).
\]
\end{theorem}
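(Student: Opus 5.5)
We upgrade the global mild solution of Proposition~\ref{prop:global-mild} by a bootstrap that reaches down to \(t=0\), using the compatible \(C^{2+\alpha}\) datum. Positive times are already handled by Lemma~\ref{lem:positive-time-classical}. Fix \(T>0\) and \(0<\beta<\alpha\). Write the equation as the linear Neumann problem
\[
\omega_t-L_p\omega=\mathcal N(\omega),\qquad \partial_n\omega=0,\qquad \omega(0)=\omega_0,
\]
and regard \(\mathcal N(\omega)\) as a given forcing term.

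\textbf{Step 1: parabolic Sobolev regularity on the whole interval \([0,T]\).} The global bound \(\sup_{[0,T]}\norm{\omega}{W^{1,q}}<\infty\) with \(q>2(d+2)\) and Lemma~\ref{lem:elliptic} show that \(\mathcal N(\omega)\in L^r(0,T;L^r(\Omega))\), where \(r=q/2>d+2\). We redo the maximal-regularity argument of Lemma~\ref{lem:positive-time-classical} without the cutoff \(\chi\).

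The initial datum \(\omega_0\in C^{2+\alpha}\) with \(\partial_n\omega_0=0\) lies in the trace space \((L^r,D(L_p^{(r)}))_{1-1/r,r}\), since it belongs to \(W^{2,r}\) and satisfies the Neumann condition. Maximal \(L^r\)-regularity for the Neumann realization of \(L_p\) with nonzero initial data then gives
\[
\omega\in W^{1,r}(0,T;L^r)\cap L^r(0,T;W^{2,r}).
\]
The parabolic Morrey embedding yields \(\omega\in C^{(1+\gamma)/2,1+\gamma}([0,T]\times\overline\Omega)\) with \(\gamma=1-(d+2)/r>0\). In particular, \(\omega\) and \(\nabla\omega\) belong to \(C^{\beta'/2,\beta'}\) up to \(t=0\) for some \(\beta'>0\).

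\textbf{Step 2: Hölder regularity of the velocity and the nonlinearity.} The elliptic Schauder estimate is applied to \(\phi(t)\) and to \(\phi(t)-\phi(s)\), which satisfy the same Neumann problem with data \((\omega-\bar\omega)(t)-(\omega-\bar\omega)(s)\). The Hölder continuity of \(\bar\omega\) follows from that of \(\omega\), because \(\bar\omega(t)=\int p\omega/\int p\). Together with interpolation between the \(C^{\gamma}\)-in-time bound and the \(C^{1+\gamma}\)-in-space bound, this gives
\[
\phi\in C^{\beta'/2}([0,T];C^{2+\beta'}).
\]
Consequently \(\mathcal N(\omega)\in C^{\beta'/2,\beta'}([0,T]\times\overline\Omega)\).

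\textbf{Step 3: Schauder estimate up to \(t=0\) and bootstrap to every \(\beta<\alpha\).} The global boundary parabolic Schauder theorem for the oblique (Neumann) problem applies here. The operator \(L_p\) has \(C^{\alpha}\) coefficients, the datum satisfies \(\omega_0\in C^{2+\beta'}\), and the first-order compatibility condition \(\partial_n\omega_0=0\) on \(\partial\Omega\) holds. The theorem gives
\[
\omega\in C^{1+\beta'/2,2+\beta'}([0,T]\times\overline\Omega).
\]
Now \(\nabla\omega\) is \(C^{(1+\beta')/2,1+\beta'}\), hence \(C^{\beta/2,\beta}\) for every \(\beta<\alpha\) once \(\beta'\) is arbitrary. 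Re-running Step 2 then gives \(\mathcal N(\omega)\in C^{\beta/2,\beta}\) and \(\phi\in C^{\beta/2}([0,T];C^{2+\beta})\). One more Schauder application then yields the stated regularity for every \(0<\beta<\alpha\).

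\textbf{Step 4: uniqueness.} Any classical solution in this class is in particular a mild solution in \(C([0,T];W^{1,q})\). Uniqueness therefore follows from Proposition~\ref{prop:global-mild}.

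\textbf{Main obstacle.} I expect the delicate point to be Step 1 at \(t=0\): verifying that the compatible datum lies in the correct trace (Besov) space for Neumann maximal regularity, so that no cutoff is needed. If this proves awkward, I would instead subtract the linear solution \(S(t)\omega_0\). By classical Schauder theory with compatibility this function lies in \(C^{1+\alpha/2,2+\alpha}\), so the remainder solves the problem with zero initial data, to which the cutoff-free maximal-regularity argument applies directly.
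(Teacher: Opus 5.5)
Your proof is correct and uses the same ingredients as the paper. These are maximal $L^r$-regularity up to $t=0$ (the trace-space point you flag as the main obstacle is immediate because $\omega_0\in D(A_r)$, which is exactly how the paper handles it), the parabolic Morrey embedding, elliptic Schauder applied to $\phi(t)-\phi(s)$, and the boundary parabolic Schauder estimate with the compatibility condition $\partial_n\omega_0=0$. Uniqueness then follows through the mild formulation.

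The one real difference is how you reach an arbitrary $\beta<\alpha$.

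\begin{itemize}
\item The paper chooses $q=q(\beta)$ so large that $\gamma=1-2(d+2)/q>\beta$. A single Schauder step then suffices. The price is an extra check that the mild solutions built with different exponents $q$ coincide, which is its closing remark.
\item You keep $q$ fixed and apply Schauder once at a small exponent $\beta'<\min\{\gamma,\alpha\}$. You then use $\nabla\omega\in C^{(1+\beta')/2,\,1+\beta'}\subset C^{\beta/2,\beta}$ to rerun Step~2 and Schauder at level $\beta$. This inclusion holds for every $\beta\le 1$ and any fixed $\beta'>0$, so the clause ``once $\beta'$ is arbitrary'' is unnecessary.
\end{itemize}

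Your route costs one extra bootstrap pass but avoids the consistency-in-$q$ argument; both are valid.

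Two small details are left implicit in your write-up, and the paper states both. In Step~1, the maximal-regularity solution must be identified with the given mild solution, since both satisfy the same Duhamel formula. In Step~4, the elliptic equation together with the normalization forces $\widehat\phi=\Phi[\widehat\omega]$, and this is needed before invoking mild uniqueness.
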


\begin{proof}
Fix \(T>0\) and \(0<\beta<\alpha\).  We prove that the global mild
solution obtained in Proposition~\ref{prop:global-mild} belongs to
\[
C^{1+\beta/2,\,2+\beta}([0,T]\times\overline\Omega),
\]
and then recover the corresponding regularity of the elliptic variable
\(\phi\).

Since \(0<\beta<\alpha<1\), we may choose \(q<\infty\) sufficiently large
so that
\begin{equation}
q>2(d+2)
\qquad\text{and}\qquad
\beta<1-\frac{2(d+2)}{q}.
\label{eq:classical-q-choice}
\end{equation}
Set
\[
r:=\frac q2.
\]
Then
\[
r>d+2,
\qquad
\gamma:=1-\frac{d+2}{r}
=1-\frac{2(d+2)}q>\beta.
\]
Because
\(\omega_0\in C^{2+\alpha}(\overline\Omega)\),
we have
\[
\omega_0\in W^{2,r}(\Omega)\cap W^{1,q}(\Omega)
\]
for every finite \(q,r\).  Moreover,
\[
\partial_n\omega_0=0
\qquad\text{on }\partial\Omega.
\]

We first obtain a preliminary Hölder regularity of \(\omega\) on the whole
closed cylinder \([0,T]\times\overline\Omega\), including \(t=0\).

Recall that the reduced equation satisfied by the mild solution is
\begin{equation}
\omega_t-L_p\omega
=
\mathcal N(\omega),
\label{eq:classical-reduced}
\end{equation}
where
\[
\mathcal N(\omega)
=
-\nabla\Phi[\omega]\cdot\nabla\omega
-|\nabla\omega|^2
-(\omega-\bar\omega).
\]
By Proposition~\ref{prop:global-mild},
\[
\omega\in C([0,T];W^{1,q}(\Omega)).
\]
Lemma~\ref{lem:elliptic} therefore gives
\[
\Phi[\omega]
\in C([0,T];W^{2,q}(\Omega))
\]
and, since \(q>d\),
\[
\nabla\Phi[\omega]
\in C([0,T];L^\infty(\Omega)).
\]
Consequently,
\[
\nabla\Phi[\omega]\cdot\nabla\omega
\in C([0,T];L^q(\Omega)).
\]
Furthermore,
\[
|\nabla\omega|^2
\in C([0,T];L^{q/2}(\Omega))
=
C([0,T];L^r(\Omega)).
\]
The weighted mean is explicitly
\begin{equation}
\bar\omega(t)
=
\frac{\displaystyle\int_\Omega p(x)\omega(t,x)\,dx}
{\displaystyle\int_\Omega p(x)\,dx},
\label{eq:classical-bar-explicit}
\end{equation}
and hence
\[
\bar\omega\in C([0,T]).
\]
Therefore
\[
\omega-\bar\omega
\in C([0,T];L^q(\Omega)).
\]
Since \(\Omega\) is bounded and \(r=q/2<q\), the embedding
\(L^q(\Omega)\hookrightarrow L^r(\Omega)\) gives
\begin{equation}
\mathcal N(\omega)
\in C([0,T];L^r(\Omega))
\subset L^r(0,T;L^r(\Omega)).
\label{eq:classical-N-Lr}
\end{equation}

Let \(A_r\) denote the Neumann realization of \(L_p\) on \(L^r(\Omega)\).
Since \(\Omega\) is \(C^{2+\alpha}\) and
\(p\in C^{2+\alpha}(\overline\Omega)\) is strictly positive, \(A_r\) is a
uniformly elliptic Neumann operator and has maximal \(L^r\)-regularity.
Its domain is
\[
D(A_r)
=
\bigl\{
u\in W^{2,r}(\Omega):
\partial_nu=0\text{ on }\partial\Omega
\bigr\}.
\]
The assumptions on the initial datum imply
\[
\omega_0\in D(A_r).
\]
In particular,
\[
\omega_0
\in
(L^r(\Omega),D(A_r))_{1-1/r,r},
\]
which is the trace space required for maximal \(L^r\)-regularity.

Now regard \(\mathcal N(\omega)\) as a known function in
\(L^r(0,T;L^r(\Omega))\).  The mild solution \(\omega\) satisfies
\[
\omega(t)
=
S(t)\omega_0
+
\int_0^tS(t-s)\mathcal N(\omega(s))\,ds.
\]
Maximal \(L^r\)-regularity for the linear problem
\[
\left\{
\begin{aligned}
u_t-L_pu&=\mathcal N(\omega),
&&\text{in }(0,T)\times\Omega,\\
\partial_nu&=0,
&&\text{on }(0,T)\times\partial\Omega,\\
u(0)&=\omega_0,
&&\text{in }\Omega,
\end{aligned}
\right.
\]
therefore yields
\begin{equation}
\omega
\in
W^{1,r}(0,T;L^r(\Omega))
\cap
L^r(0,T;W^{2,r}(\Omega)).
\label{eq:classical-maximal-reg}
\end{equation}
Indeed, the maximal-regularity solution satisfies the same
variation-of-constants formula as the given mild solution, and uniqueness for
the linear problem shows that the two coincide.

Since \(r>d+2\), the parabolic Sobolev--Morrey embedding applied to
\eqref{eq:classical-maximal-reg} gives
\begin{equation}
\omega
\in
C^{(1+\gamma)/2,\,1+\gamma}
([0,T]\times\overline\Omega),
\qquad
\gamma
=
1-\frac{d+2}{r}>0.
\label{eq:classical-first-holder}
\end{equation}
In particular, because \(\beta<\gamma\),
\begin{equation}
\nabla\omega
\in
C^{\beta/2,\,\beta}
([0,T]\times\overline\Omega).
\label{eq:classical-grad-holder}
\end{equation}
We shall also need the time Hölder continuity of \(\omega\) with values in
\(C^\beta(\overline\Omega)\).  This follows from
\eqref{eq:classical-first-holder}.  For completeness, let
\(0\le s<t\le T\).  The spatial interpolation inequality gives
\[
\|\omega(t)-\omega(s)\|_{C^\beta}
\le
C
\|\omega(t)-\omega(s)\|_{C^0}^{1-\theta}
\|\omega(t)-\omega(s)\|_{C^{1+\gamma}}^\theta,
\qquad
\theta=\frac{\beta}{1+\gamma}.
\]
The \(C^{1+\gamma}\)-norm is uniformly bounded by
\eqref{eq:classical-first-holder}, while
\[
\|\omega(t)-\omega(s)\|_{C^0}
\le
C|t-s|^{(1+\gamma)/2}.
\]
Hence
\[
\|\omega(t)-\omega(s)\|_{C^\beta}
\le
C|t-s|^{(1+\gamma-\beta)/2}.
\]
Because \(\beta<\gamma<1\),
\[
\frac{1+\gamma-\beta}{2}>\frac{\beta}{2}.
\]
After enlarging the constant on the finite interval \([0,T]\), we obtain
\begin{equation}
\|\omega(t)-\omega(s)\|_{C^\beta}
\le
C_T|t-s|^{\beta/2}.
\label{eq:classical-omega-time-holder}
\end{equation}
Thus
\begin{equation}
\omega
\in
C^{\beta/2}
\bigl([0,T];C^\beta(\overline\Omega)\bigr).
\label{eq:classical-omega-Cbeta}
\end{equation}

We next improve the regularity of the elliptic variable.  For every
\(t\in[0,T]\), \(\phi(t)=\Phi[\omega(t)]\) is the normalized solution of
\begin{equation}
\left\{
\begin{aligned}
L_p\phi(t)
&=-(\omega(t)-\bar\omega(t)),
&&\text{in }\Omega,\\
\partial_n\phi(t)&=0,
&&\text{on }\partial\Omega,\\
\int_\Omega\phi(t,x)\,dx&=0.
\end{aligned}
\right.
\label{eq:classical-elliptic}
\end{equation}
The compatibility condition holds because, by the definition of
\(\bar\omega(t)\),
\[
\int_\Omega
p(x)(\omega(t,x)-\bar\omega(t))\,dx
=0.
\]
Since \(p\in C^{2+\alpha}(\overline\Omega)\),
\(\partial\Omega\) is \(C^{2+\alpha}\), and
\(\beta<\alpha\), the elliptic Schauder estimate for the Neumann problem gives
\begin{equation}
\|\phi(t)\|_{C^{2+\beta}}
\le
C
\|\omega(t)-\bar\omega(t)\|_{C^\beta}.
\label{eq:classical-phi-schauder}
\end{equation}
Here the normalization
\(\int_\Omega\phi(t)\,dx=0\) removes the constant kernel of the Neumann
operator, so that no additional \(C^0\)-term is needed on the right-hand side.

The scalar map \(w\mapsto\bar w\) is a bounded linear functional on
\(C^\beta(\overline\Omega)\).  Indeed,
\[
|\bar w|
\le
\frac{\|p\|_{L^1}}{\int_\Omega p\,dx}
\|w\|_{C^0}
\le C\|w\|_{C^\beta}.
\]
Thus, from \eqref{eq:classical-omega-time-holder},
\[
|\bar\omega(t)-\bar\omega(s)|
\le
C
\|\omega(t)-\omega(s)\|_{C^\beta}
\le
C_T|t-s|^{\beta/2}.
\]
Subtracting the elliptic equations at times \(t\) and \(s\), we obtain
\[
L_p(\phi(t)-\phi(s))
=
-\Bigl[
\omega(t)-\omega(s)
-
\bigl(\bar\omega(t)-\bar\omega(s)\bigr)
\Bigr],
\]
with
\[
\partial_n(\phi(t)-\phi(s))=0,
\qquad
\int_\Omega(\phi(t)-\phi(s))\,dx=0.
\]
Applying the same normalized elliptic Schauder estimate therefore gives
\begin{align}
\|\phi(t)-\phi(s)\|_{C^{2+\beta}}
&\le
C
\left\|
\omega(t)-\omega(s)
-
\bigl(\bar\omega(t)-\bar\omega(s)\bigr)
\right\|_{C^\beta}
\nonumber\\
&\le
C_T|t-s|^{\beta/2}.
\label{eq:classical-phi-time-holder}
\end{align}
Consequently,
\begin{equation}
\phi
\in
C^{\beta/2}
\left(
[0,T];C^{2+\beta}(\overline\Omega)
\right).
\label{eq:classical-phi-final}
\end{equation}

We can now verify that the full nonlinear right-hand side in
\eqref{eq:classical-reduced} belongs to the parabolic Hölder space required
by Schauder theory.  From
\eqref{eq:classical-grad-holder} and
\eqref{eq:classical-phi-final},
\[
\nabla\omega
\in C^{\beta/2,\beta},
\qquad
\nabla\phi
\in C^{\beta/2,\beta}
\]
on \([0,T]\times\overline\Omega\).  The parabolic Hölder space
\(C^{\beta/2,\beta}\) is an algebra, and hence
\[
\nabla\phi\cdot\nabla\omega
\in
C^{\beta/2,\beta}
([0,T]\times\overline\Omega),
\]
and
\[
|\nabla\omega|^2
\in
C^{\beta/2,\beta}
([0,T]\times\overline\Omega).
\]
Moreover,
\[
\omega-\bar\omega
\in
C^{\beta/2,\beta}
([0,T]\times\overline\Omega).
\]
It follows that
\begin{equation}
\mathcal N(\omega)
\in
C^{\beta/2,\beta}
([0,T]\times\overline\Omega).
\label{eq:classical-N-holder}
\end{equation}

We now apply the global boundary Schauder estimate to
\begin{equation}
\left\{
\begin{aligned}
\omega_t-L_p\omega
&=\mathcal N(\omega),
&&\text{in }(0,T)\times\Omega,\\
\partial_n\omega&=0,
&&\text{on }(0,T)\times\partial\Omega,\\
\omega(0,\cdot)&=\omega_0,
&&\text{in }\Omega.
\end{aligned}
\right.
\label{eq:classical-parabolic-schauder-problem}
\end{equation}
To check the hypotheses explicitly, note that
\[
L_p
=
\Delta+\nabla\log p\cdot\nabla,
\]
and
\[
\nabla\log p
\in C^{1+\alpha}(\overline\Omega)
\subset C^\beta(\overline\Omega).
\]
The boundary \(\partial\Omega\) is \(C^{2+\alpha}\), hence
\(C^{2+\beta}\), and the initial datum satisfies
\[
\omega_0
\in C^{2+\alpha}(\overline\Omega)
\subset C^{2+\beta}(\overline\Omega).
\]
The zero-order compatibility condition at the parabolic corner is precisely
\[
\partial_n\omega_0=0
\qquad\text{on }\partial\Omega,
\]
which is one of the assumptions of Theorem~\ref{thm:main}.  Finally,
\eqref{eq:classical-N-holder} supplies the required Hölder regularity of the
forcing term.  Boundary parabolic Schauder theory therefore yields
\begin{equation}
\omega
\in
C^{1+\beta/2,\,2+\beta}
([0,T]\times\overline\Omega).
\label{eq:classical-omega-final}
\end{equation}
More precisely, one has an estimate of the form
\[
\|\omega\|_{C^{1+\beta/2,\,2+\beta}
([0,T]\times\overline\Omega)}
\le
C_T
\left(
\|\omega_0\|_{C^{2+\beta}}
+
\|\mathcal N(\omega)\|_{C^{\beta/2,\beta}
([0,T]\times\overline\Omega)}
+
\|\omega\|_{C^0([0,T]\times\overline\Omega)}
\right),
\]
where the constant depends only on
\(\Omega,p,\beta\), and \(T\).

We have therefore proved
\[
\omega
\in
C^{1+\beta/2,\,2+\beta}
([0,T]\times\overline\Omega),
\qquad
\phi
\in
C^{\beta/2}
\left([0,T];C^{2+\beta}(\overline\Omega)\right).
\]
Since \(T>0\) and \(0<\beta<\alpha\) were arbitrary, these estimates hold on
every finite time interval and for every such \(\beta\).  In particular,
\(\omega_t\), \(D^2\omega\), \(\nabla\omega\), and \(\nabla\phi\) are
continuous with the indicated Hölder regularity, so both equations in
\eqref{eq:system}, as well as the Neumann boundary conditions, are satisfied
pointwise.  Hence the global mild solution is a global classical solution.

For completeness, we finally verify uniqueness in the classical class.  Let
\((\widehat\omega,\widehat\phi)\) be another global classical solution of
\eqref{eq:system} with the same initial datum.  The elliptic equation, the
Neumann boundary condition, and the normalization imply
\[
\widehat\phi(t)=\Phi[\widehat\omega(t)]
\]
for every \(t\).  On every finite interval \([0,T]\), classical regularity
implies
\[
\widehat\omega
\in C([0,T];W^{1,q}(\Omega))
\]
for every finite \(q\).  Applying the variation-of-constants formula to its
parabolic equation shows that \(\widehat\omega\) satisfies the mild
formulation \eqref{eq:mild}.  Thus both \(\widehat\omega\) and the solution
constructed in Proposition~\ref{prop:global-mild} are \(W^{1,q}\)-mild
solutions with the same initial datum.  Local uniqueness from
Proposition~\ref{prop:local} gives equality on a nontrivial initial time
interval.  Reapplying the same local uniqueness argument at the endpoint of
that interval and continuing finitely many times on any prescribed
\([0,T]\) yields
\[
\widehat\omega=\omega
\qquad\text{on }[0,T].
\]
The elliptic uniqueness then gives
\[
\widehat\phi=\phi.
\]
Since \(T>0\) is arbitrary, the classical solution is unique globally.

There is one minor point concerning the choice of \(q\) in
\eqref{eq:classical-q-choice}.  For smooth initial data,
\(\omega_0\in W^{1,q}(\Omega)\) for every finite \(q\), so
Propositions~\ref{prop:local} and~\ref{prop:global-mild} may be applied with
any sufficiently large \(q\).  Solutions obtained using two different
exponents \(q_1<q_2\) coincide: the \(W^{1,q_2}\)-solution also belongs to
\(C([0,T];W^{1,q_1})\) because \(\Omega\) is bounded, and uniqueness in the
\(W^{1,q_1}\)-class identifies the two solutions locally; continuation gives
equality on every finite interval.  Thus choosing \(q\) depending on the
prescribed exponent \(\beta\) does not change the underlying global mild
solution.

This proves the theorem and, together with
Proposition~\ref{prop:global-mild}, completes the proof of
Theorem~\ref{thm:main}.
\end{proof}

\section{Long-time behavior and exponential convergence}
\label{sec:long-time}

This section characterizes the equilibrium selected by the conserved mass and
establishes exponential convergence of the accelerated Langevin dynamics whose
feedback velocity is constructed by the parabolic--elliptic PDE system.  We prove
two complementary forms of relaxation: uniform decay of the logarithmic density
ratio and dissipation of relative entropy.  We keep the
unnormalized target \(p\) in the evolution and use
\[
Z_p=\int_\Omega p(x)\,dx,
\qquad
\pi=\frac{p}{Z_p},
\qquad
d\mu(x):=\pi(x)\,dx,
\qquad
\int_\Omega d\mu=1
\]
whenever a probability measure is required.

\begin{theorem}[Exponential convergence to equilibrium]
\label{thm:convergence}
Let \((\omega,\phi)\) be the global classical solution of
Theorem~\ref{thm:main}, and let
\begin{equation}
\label{eq:initial-q-mass}
m_0
:=
\int_\Omega p(x)e^{-\omega_0(x)}\,dx
=
\int_\Omega q_0(x)\,dx.
\end{equation}
Define
\begin{equation}
\label{eq:omega-infty}
\omega_\infty
:=
-\log\left(\frac{m_0}{Z_p}\right)
=
\log\frac{Z_p}{m_0}.
\end{equation}
Then, for every \(t\ge0\),
\begin{equation}
\label{eq:Linfty-convergence}
\norm{\omega(t)-\omega_\infty}{L^\infty}
\le
e^{-t}\operatorname{osc}_\Omega\omega_0.
\end{equation}
In particular,
\[
\omega(t,\cdot)\longrightarrow\omega_\infty
\quad\text{and}\quad
q_t=p e^{-\omega(t,\cdot)}
\longrightarrow
\frac{m_0}{Z_p}p
=
m_0\pi
\]
uniformly as \(t\to\infty\).  If \(q_0\) is normalized, then \(m_0=1\),
\(\omega_\infty=\log Z_p\), and \(q_t\to\pi=p/Z_p\).
\end{theorem}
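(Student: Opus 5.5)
The plan is to combine the maximum-principle differential inequalities from the proof of Lemma~\ref{lem:maximum-principle} with conservation of mass. Set
\[
M(t):=\max_{\overline\Omega}\omega(t,\cdot),\qquad m(t):=\min_{\overline\Omega}\omega(t,\cdot).
\]
(I will write \(m(t)\) for the running minimum; it should not be confused with the mass \(m_0\) of \eqref{eq:initial-q-mass}.)

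By Theorem~\ref{thm:main} the solution is classical on \([0,T]\times\overline\Omega\) for every \(T\), up to \(t=0\). So the argument of Lemma~\ref{lem:maximum-principle}, together with Lemma~\ref{lem:Dini-max}, now applies on all of \([0,\infty)\) and gives
\[
D^+M(t)\le -(M(t)-\bar\omega(t)),\qquad D_+m(t)\ge \bar\omega(t)-m(t).
\]
Subtracting, the term \(\bar\omega\) cancels and \(D^+(M-m)(t)\le -(M-m)(t)\). Applying the Dini monotonicity criterion to \(e^{t}(M-m)(t)\) then gives
\[
\operatorname{osc}_\Omega\omega(t)=M(t)-m(t)\le e^{-t}\operatorname{osc}_\Omega\omega_0 .
\]
This exact rate \(e^{-t}\) is the core of the estimate.

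Next I show that the mass \(\int_\Omega p e^{-\omega(t)}\,dx=\int_\Omega q_t\,dx\) equals \(m_0\) for all \(t\). I will use the \(u\)-equation \eqref{eq:u-equation} with \(u=e^{-\omega}\), multiplied by \(p\):
\[
p u_t=\nabla\cdot(p\nabla u)-p\nabla\phi\cdot\nabla u+p(\omega-\bar\omega)u .
\]
Integrating over \(\Omega\), the first term vanishes by \(\partial_n u=0\). For the second, integration by parts with \(\partial_n\phi=0\) and the elliptic equation \(\nabla\cdot(p\nabla\phi)=-p(\omega-\bar\omega)\) give
\[
\int_\Omega p\nabla\phi\cdot\nabla u\,dx=\int_\Omega p(\omega-\bar\omega)u\,dx,
\]
which cancels the third term exactly. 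Hence \(\frac{d}{dt}\int_\Omega pu\,dx=0\). Classical regularity justifies this for \(t>0\), and continuity at \(t=0\) extends it to all \(t\ge0\).

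Then I pin the limit. Since \(e^{-M(t)}Z_p\le\int_\Omega pe^{-\omega(t)}\,dx=m_0\le e^{-m(t)}Z_p\), we get \(m(t)\le\omega_\infty\le M(t)\) with \(\omega_\infty=\log(Z_p/m_0)\). Therefore, for every \(x\),
\[
|\omega(t,x)-\omega_\infty|\le M(t)-m(t)\le e^{-t}\operatorname{osc}_\Omega\omega_0,
\]
which is \eqref{eq:Linfty-convergence}. Uniform convergence of \(q_t\) follows from
\[
|q_t-m_0\pi|=p\,e^{-\omega_\infty}\bigl|e^{-(\omega-\omega_\infty)}-1\bigr|\le p^*e^{-\omega_\infty}\bigl(e^{\varepsilon(t)}-1\bigr),\qquad \varepsilon(t):=e^{-t}\operatorname{osc}_\Omega\omega_0\to0 .
\]
In the normalized case \(m_0=1\), so \(\omega_\infty=\log Z_p\) and \(q_t\to\pi\).

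The main obstacle is the rigorous Dini step near \(t=0\) and the handling of the \(\bar\omega\) term. The subtraction trick removes \(\bar\omega\), so the second issue disappears. For the first, if classical regularity up to \(t=0\) were insufficient for Lemma~\ref{lem:Dini-max}, I would instead prove the bound on \([\tau,\infty)\) and let \(\tau\downarrow0\), using the uniform continuity of \(\omega\) at \(t=0\), exactly as in Lemma~\ref{lem:maximum-principle}.
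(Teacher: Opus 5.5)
Your proposal is correct and follows essentially the same route as the paper. The paper also uses the Dini inequalities for \(M(t)\) and \(m(t)\), cancels \(\bar\omega\) to get \(D^+\mathcal O\le-\mathcal O\), applies the Dini--Gr\"onwall lemma, and then places \(\omega_\infty\) between \(m(t)\) and \(M(t)\) using conservation of \(\int_\Omega pe^{-\omega}\,dx\). The only differences are that you verify that conservation law explicitly through the \(u\)-equation and the weighted Poisson equation (the paper simply invokes it), and that you give an explicit uniform bound on \(q_t-m_0\pi\).
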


\begin{proof}
Set
\[
M(t):=\max_{\overline\Omega}\omega(t,\cdot),
\qquad
m(t):=\min_{\overline\Omega}\omega(t,\cdot),
\qquad
\mathcal O(t):=M(t)-m(t).
\]
By the Dini-derivative estimates established in Appendix~\ref{app:dini}
(see \eqref{eq:Dini-M-app}--\eqref{eq:Dini-m-app}),
\begin{align}
D^+M(t)
&\le -(M(t)-\bar\omega(t)),
\label{eq:Dini-M}\\
D_+m(t)
&\ge \bar\omega(t)-m(t).
\label{eq:Dini-m}
\end{align}
Consequently,
\begin{align}
D^+\mathcal O(t)
&\le D^+M(t)-D_+m(t)
\nonumber\\
&\le
-(M(t)-\bar\omega(t))
-(\bar\omega(t)-m(t))
\nonumber\\
&=
-\mathcal O(t).
\label{eq:Dini-osc-main}
\end{align}
Applying the Dini--Gr\"onwall inequality, Lemma~\ref{lem:Dini-Gronwall}, gives
\begin{equation}
\label{eq:osc-decay}
\mathcal O(t)\le e^{-t}\mathcal O(0).
\end{equation}

The conservation law gives
\begin{equation}
\label{eq:exp-mass-conservation}
\frac{d}{dt}
\int_\Omega p(x)e^{-\omega(t,x)}\,dx
=0,
\end{equation}
and hence
\begin{equation}
\label{eq:exp-mass-value}
\int_\Omega p(x)e^{-\omega(t,x)}\,dx
=
m_0
=
Z_p e^{-\omega_\infty}.
\end{equation}
Since
\[
Z_p e^{-M(t)}
\le
\int_\Omega p(x)e^{-\omega(t,x)}\,dx
\le
Z_p e^{-m(t)},
\]
we obtain
\[
e^{-M(t)}
\le
\frac{m_0}{Z_p}
=
e^{-\omega_\infty}
\le
e^{-m(t)}.
\]
Therefore \(\omega_\infty\in[m(t),M(t)]\), and
\[
\norm{\omega(t)-\omega_\infty}{L^\infty}
\le
M(t)-m(t)
\le
e^{-t}\operatorname{osc}_\Omega\omega_0,
\]
which proves \eqref{eq:Linfty-convergence}.  The stated convergence of \(q_t\)
follows from \(q_t=pe^{-\omega(t)}\).
\end{proof}

The density \(q_t\) in Theorem~\ref{thm:convergence} is precisely the density of
the accelerated Langevin dynamics introduced in Section~\ref{sec:model}.  
Therefore Theorem~\ref{thm:convergence} is an exponential convergence result for
the accelerated Langevin dynamics itself: for normalized \(q_0\), the law of
\(X_t\) converges to the normalized target \(\pi=p/Z_p\), while the logarithmic
density ratio contracts at the explicit rate \(e^{-t}\).

The normalization of \(p\) affects only the additive equilibrium level of
\(\omega\). For a probability density \(q_t\), the limiting value is
\(\log Z_p\), while the limiting density is the normalized target
\(\pi=p/Z_p\).  The evolution itself is unchanged by the unknown constant \(Z_p\).

\subsection{Exact entropy dissipation and KL convergence}

For the entropy statements, \(q\) is a probability density and the reference
probability density is
\(
\pi=p/Z_p.
\)
We use the relative Fisher information
\begin{equation}
\label{eq:relative-fisher}
I(q\Vert \pi)
:=
\int_\Omega
q\left|\nabla\log\frac{q}{\pi}\right|^2dx
=
\int_\Omega
q\left|\nabla\log\frac{q}{p}\right|^2dx
=
\int_\Omega q|\nabla r|^2dx
\end{equation}
and the Jeffreys divergence
\begin{equation}
\label{eq:jeffreys}
J(q,\pi)
:=
\operatorname{KL}(q\Vert \pi)
+
\operatorname{KL}(\pi\Vert q).
\end{equation}

\begin{theorem}[Exact Jeffreys--Fisher entropy dissipation]
\label{thm:jeffreys-fisher}
Let \((\omega,\phi)\) be the global classical solution of
Theorem~\ref{thm:main}, set
\[
q_t:=p e^{-\omega(t,\cdot)},
\]
and assume that \(q_0\) is normalized:
\[
\int_\Omega q_0\,dx=1.
\]
Then \(q_t\) remains normalized for all \(t\ge0\), and
\begin{equation}
\label{eq:jeffreys-fisher-identity}
\frac{d}{dt}\operatorname{KL}(q_t\Vert \pi)
=
-I(q_t\Vert \pi)
-J(q_t,\pi).
\end{equation}
\end{theorem}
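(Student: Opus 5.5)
We would first derive the conservative density equation satisfied by \(q_t=pe^{-\omega}\), and then differentiate \(\operatorname{KL}(q_t\Vert\pi)\) directly. Since \(\partial_t q=-q\,\partial_t\omega\), the \(\omega\)-equation \eqref{eq:hybrid-omega-linear-model} reverses the derivation of \eqref{eq:omega-general-v}. With \(v=\nabla\phi\) this gives
\[
\partial_t q=\nabla\cdot\Bigl(q\nabla\log\frac{q}{p}\Bigr)-\nabla\cdot(q\nabla\phi)
=-\nabla\cdot(q\nabla\omega)-\nabla\cdot(q\nabla\phi).
\]
The Neumann conditions \(\partial_n\omega=\partial_n\phi=0\) make the flux vanish on \(\partial\Omega\). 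Integrating therefore gives conservation of \(\int_\Omega q_t\,dx\), which is \eqref{eq:exp-mass-conservation}, so \(q_t\) stays normalized. All manipulations are justified classically by Theorem~\ref{thm:main}. Because \(\omega\) is bounded (Lemma~\ref{lem:maximum-principle}), \(q_t\) is bounded above and below by positive constants, so every integral is finite.

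Next we differentiate. Using \(\log(q/\pi)=\log Z_p-\omega\) and mass conservation,
\[
\frac{d}{dt}\operatorname{KL}(q_t\Vert\pi)=\int_\Omega\partial_t q\,\log\frac{q}{\pi}\,dx+\int_\Omega\partial_t q\,dx=-\int_\Omega\omega\,\partial_t q\,dx .
\]
Inserting the flux form and integrating by parts with zero boundary flux yields
\[
\frac{d}{dt}\operatorname{KL}=-\int_\Omega q|\nabla\omega|^2dx-\int_\Omega q\nabla\phi\cdot\nabla\omega\,dx .
\]
The first term is exactly \(-I(q_t\Vert\pi)\), since \(\nabla\log(q/p)=-\nabla\omega\).

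The main step is to identify the cross term with \(J\). We rewrite \(q\nabla\omega=-p\nabla e^{-\omega}\) and integrate by parts once more using \(\partial_n\phi=0\):
\[
-\int_\Omega q\nabla\phi\cdot\nabla\omega\,dx=\int_\Omega p\nabla\phi\cdot\nabla e^{-\omega}\,dx=-\int_\Omega e^{-\omega}\nabla\cdot(p\nabla\phi)\,dx=\int_\Omega q(\omega-\bar\omega)\,dx,
\]
where the last equality uses \(\nabla\cdot(p\nabla\phi)=pL_p\phi=-p(\omega-\bar\omega)\). Since \(\int q=1\), this equals \(\int q\omega\,dx-\bar\omega\). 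By \eqref{eq:p-mean-model}, \(\bar\omega=\int\pi\omega\,dx\). Together with \(\omega=\log(\pi/q)+\log Z_p\), both \(\log Z_p\) contributions cancel, leaving
\[
\int_\Omega q\log\frac{\pi}{q}\,dx-\int_\Omega\pi\log\frac{\pi}{q}\,dx=-\operatorname{KL}(q\Vert\pi)-\operatorname{KL}(\pi\Vert q)=-J(q_t,\pi).
\]
Summing the two contributions gives \eqref{eq:jeffreys-fisher-identity}.

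The only delicate point I anticipate is the differentiation under the integral at \(t=0\). It is handled by the \(C^{1+\beta/2,2+\beta}\) regularity up to \(t=0\) from Theorem~\ref{thm:main}, together with the uniform positivity of \(q_t\).
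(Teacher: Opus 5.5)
Your proof is correct and follows essentially the same route as the paper: you differentiate the KL divergence using the conservative form of the density equation, read off the Fisher term from the Langevin flux, and turn the transport cross term into $-J(q_t,\pi)$ via $q\nabla\omega=-p\nabla e^{-\omega}$, the weighted Poisson equation, and $\bar\omega=\int_\Omega\pi\omega\,dx$ (the paper uses the equivalent identity $q\nabla r=\pi\nabla e^{r}$ with $r=\log Z_p-\omega$). Your explicit derivation of mass conservation from the zero-flux Neumann condition also fills in a step that the paper only invokes.
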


\begin{proof}
With
\[
r:=\log\frac{q_t}{\pi}=-\omega+\log Z_p,
\]
define
\[
H(t):=\operatorname{KL}(q_t\Vert \pi)
=
\int_\Omega q_t\,r\,dx.
\]
Direct calculation gives
\begin{align}
H'(t)
&=
-\int_\Omega
r\,\nabla\cdot(q_t\nabla\phi)\,dx
+
\int_\Omega
r\,\nabla\cdot(q_t\nabla r)\,dx.
\label{eq:entropy-split}
\end{align}

For the Langevin contribution, integration by parts yields
\begin{align}
\int_\Omega
r\,\nabla\cdot(q_t\nabla r)\,dx
&=
-\int_\Omega q_t|\nabla r|^2dx
\nonumber\\
&=
-I(q_t\Vert \pi).
\label{eq:entropy-langevin-part}
\end{align}
For the weighted-Poisson transport contribution,
\begin{align}
-\int_\Omega
r\,\nabla\cdot(q_t\nabla\phi)\,dx
&=
\int_\Omega q_t\nabla r\cdot\nabla\phi\,dx.
\label{eq:entropy-wlf-1}
\end{align}
Since \(q_t=\pi e^r\),
\[
q_t\nabla r=\pi\nabla(e^r).
\]
Using \(L_\pi=L_p\), integration by parts gives
\begin{align}
\int_\Omega q\nabla r\cdot\nabla\phi\,dx
&=
\int_\Omega \pi\nabla(e^r)\cdot\nabla\phi\,dx
\nonumber\\
&=
-\int_\Omega
e^r\,\nabla\cdot(\pi\nabla\phi)\,dx
\nonumber\\
&=
-\int_\Omega q_t\,\Lp\phi\,dx
\nonumber\\
&=
-\int_\Omega q_t(r-\bar r_\pi)\,dx.
\label{eq:entropy-wlf-2}
\end{align}
with $\bar r_\pi=-\bar\omega+\log Z_p$.

Because both \(q_t\) and \(\pi\) are normalized,
\[
\int_\Omega q_t(r-\bar r_\pi)\,dx
=
\int_\Omega q_t r\,dx-\bar r_\pi.
\]
Moreover,
\[
\int_\Omega q_tr\,dx
=
\operatorname{KL}(q_t\Vert \pi),
\qquad
\bar r_\pi
=
\int_\Omega \pi\log\frac{q_t}{\pi}\,dx
=
-\operatorname{KL}(\pi\Vert q_t).
\]
Hence
\begin{equation}
\label{eq:entropy-wlf-final}
-\int_\Omega q_t(r-\bar r_\pi)\,dx
=
-
\left[
\operatorname{KL}(q_t\Vert \pi)
+
\operatorname{KL}(\pi\Vert q_t)
\right]
=
-J(q_t,\pi).
\end{equation}
Combining \eqref{eq:entropy-split}, \eqref{eq:entropy-langevin-part}, and
\eqref{eq:entropy-wlf-final} proves
\eqref{eq:jeffreys-fisher-identity}.
\end{proof}

\begin{corollary}[Exponential convergence in relative entropy]
\label{cor:entropy-rates}
Under the assumptions of Theorem~\ref{thm:jeffreys-fisher},
\begin{equation}
\label{eq:unconditional-kl-rate}
\operatorname{KL}(q_t\Vert \pi)
\le
e^{-t}\operatorname{KL}(q_0\Vert \pi),
\qquad t\ge0.
\end{equation}
\end{corollary}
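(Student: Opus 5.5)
The plan is to combine the exact dissipation identity of Theorem~\ref{thm:jeffreys-fisher} with two elementary sign facts and then close with a Gr\"onwall argument. Write \(H(t):=\operatorname{KL}(q_t\Vert\pi)\). Since \(q_t\) is normalized for all \(t\ge0\) (Theorem~\ref{thm:jeffreys-fisher}), Gibbs' inequality gives \(\operatorname{KL}(\pi\Vert q_t)\ge0\). The relative Fisher information \(I(q_t\Vert\pi)\) is an integral of a nonnegative integrand, so it is nonnegative. Hence \eqref{eq:jeffreys-fisher-identity} yields
\[
H'(t)=-I(q_t\Vert\pi)-\operatorname{KL}(q_t\Vert\pi)-\operatorname{KL}(\pi\Vert q_t)\le -H(t),\qquad t>0 .
\]
Multiplying by \(e^{t}\) gives \(\frac{d}{dt}\bigl(e^tH(t)\bigr)\le0\), so \(e^tH(t)\le H(0)\), which is \eqref{eq:unconditional-kl-rate}.

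The one point that needs care is justifying that \(H\) is continuous on \([0,\infty)\) and differentiable for \(t>0\), so that the differential inequality can be integrated from \(0\). For this I will use Theorem~\ref{thm:main}. By that theorem, \(\omega\in C^{1+\beta/2,2+\beta}([0,T]\times\overline\Omega)\), so \(\omega\) and \(\partial_t\omega\) are continuous and bounded on \([0,T]\times\overline\Omega\). Moreover, by \eqref{eq:invariant}, \(q_t=pe^{-\omega}\) is bounded above and below by positive constants, uniformly in \(t\).

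Next observe that \(r=-\omega+\log Z_p\), so
\[
H(t)=\int_\Omega p\,e^{-\omega}(\log Z_p-\omega)\,dx.
\]
The integrand is \(C^1\) in \(t\) with a uniformly bounded derivative. Differentiation under the integral sign is therefore legitimate, and \(H\in C^1([0,T])\). This is exactly the regularity used in the direct calculation \eqref{eq:entropy-split}, so the identity \eqref{eq:jeffreys-fisher-identity} holds pointwise on \([0,T]\) for every \(T\).

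I expect no substantial obstacle. The only subtle step is remembering that the inequality \(J\ge \operatorname{KL}(q\Vert\pi)\) relies on the normalization of \(q_t\), since Gibbs' inequality needs both measures to be probability measures. That normalization is supplied by the mass conservation \eqref{eq:exp-mass-conservation} together with \(\int_\Omega q_0\,dx=1\).
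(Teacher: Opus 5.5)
Your proposal is correct and follows the paper's argument: you drop $I(q_t\Vert\pi)\ge0$, use $J(q_t,\pi)\ge\operatorname{KL}(q_t\Vert\pi)$ from the nonnegativity of $\operatorname{KL}(\pi\Vert q_t)$ for normalized densities, and apply Gr\"onwall to $\frac{d}{dt}\operatorname{KL}(q_t\Vert\pi)\le-\operatorname{KL}(q_t\Vert\pi)$. Your extra check that $H\in C^1([0,T])$, using the classical regularity of $\omega$ and the uniform bounds from the maximum principle, is a useful detail that the paper leaves implicit.
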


\begin{proof}
Since relative entropy is nonnegative,
\[
J(q,\pi)
=
\operatorname{KL}(q\Vert \pi)
+
\operatorname{KL}(\pi\Vert q)
\ge
\operatorname{KL}(q\Vert \pi).
\]
Thus \eqref{eq:jeffreys-fisher-identity} implies
\[
\frac{d}{dt}\operatorname{KL}(q_t\Vert \pi)
\le
-\operatorname{KL}(q_t\Vert \pi),
\]
and Gr\"onwall's inequality gives \eqref{eq:unconditional-kl-rate}.
\end{proof}


\section{Conclusion}
\label{sec:conclusion}

We have analyzed an accelerated Langevin dynamics in which the classical local
Langevin relaxation is coupled to a nonlocal gradient velocity reconstructed from
a target-weighted Poisson equation.  The target \(p\) may be unnormalized.  On a
bounded \(C^{2+\alpha}\) domain with homogeneous Neumann boundary conditions, the
logarithmic ratio \(\omega=\log(p/q)\) leads to a parabolic--elliptic system for
which we established local well-posedness in \(W^{1,q}\), positive-time
regularization, and global classical well-posedness.  The key a priori ingredients
are the maximum principle for \(\omega\) and the global \(W^{1,q}\) control
obtained after the transformation \(u=e^{-\omega}\).

The evolution is invariant under multiplication of \(p\) by a positive constant.
Writing
\[
Z_p=\int_\Omega p\,dx,
\qquad
\pi=\frac{p}{Z_p},
\]
the conserved quantity
\(
\int_\Omega p e^{-\omega}\,dx
=
\int_\Omega q_t\,dx
\)
selects the limiting additive constant of \(\omega\), while its spatial
oscillation contracts at the explicit rate \(e^{-t}\).  For normalized solution
densities, \(\omega(t)\to\log Z_p\) and \(q_t\to\pi\) uniformly.  At the entropy
level, the exact Jeffreys--Fisher dissipation identity is naturally written
relative to the normalized target \(\pi\): the Langevin diffusion dissipates
\(I(q_t\Vert\pi)\), whereas the weighted-Poisson transport dissipates
\(J(q_t,\pi)\).  Consequently,
\(\operatorname{KL}(q_t\Vert\pi)\) decays at least as \(e^{-t}\) without a
convexity assumption on the target.

These results provide a continuous-time analytical foundation for combining local
diffusive sampling with nonlocal weighted-Laplacian transport.  Natural directions
for further study include extensions to unbounded domains and weaker target
regularity, particle
approximation and quantitative numerical investigations of the resulting
acceleration especially for high dimensional problems.

\appendix

\section{Heat-semigroup estimates for the weighted diffusion}
\label{app:semigroup}

This appendix derives the estimates
\begin{align*}
\norm{S(t)g}{W^{1,q}}
&\lesssim t^{-1/2}\norm{g}{L^q},\\
\norm{S(t)g}{W^{1,q}}
&\lesssim
t^{-\frac12-\frac{d}{2q}}\norm{g}{L^{q/2}},
\end{align*}
used in the fixed-point argument.

\subsection{Heat-kernel bounds}

The Neumann realization of
\[
L_p=\Delta+\nabla\log p\cdot\nabla
\]
is a uniformly elliptic second-order operator with smooth bounded lower-order
coefficients on a bounded smooth domain. Its heat semigroup has a kernel
\(K(t,x,y)\) satisfying, for \(0<t\le1\),
\begin{align}
|K(t,x,y)|
&\le
C t^{-d/2}
\exp\left(-\frac{|x-y|^2}{ct}\right),
\label{eq:kernel-Gaussian}\\
|\nabla_xK(t,x,y)|
&\le
C t^{-(d+1)/2}
\exp\left(-\frac{|x-y|^2}{ct}\right).
\label{eq:kernel-gradient}
\end{align}
These are the standard Gaussian and gradient Gaussian bounds for the Neumann heat
kernel. The exponent calculations below are an elementary consequence of
\eqref{eq:kernel-Gaussian}--\eqref{eq:kernel-gradient}.

Extend \(g\) by zero outside \(\Omega\), denoting the extension by \(\widetilde g\).
Define on \(\R^d\)
\[
H_t(z):=
Ct^{-d/2}e^{-|z|^2/(ct)},
\qquad
G_t(z):=
Ct^{-(d+1)/2}e^{-|z|^2/(ct)}.
\]
Then
\begin{align}
|S(t)g(x)|
&\le (H_t*|\widetilde g|)(x),
\label{eq:conv-H}\\
|\nabla S(t)g(x)|
&\le (G_t*|\widetilde g|)(x).
\label{eq:conv-G}
\end{align}

\subsection{The \(L^a\) norms of the Gaussian kernels}

Let \(1\le a\le\infty\). For finite \(a\),
\begin{align*}
\norm{H_t}{L^a(\R^d)}^a
&=
C^a t^{-da/2}
\int_{\R^d}e^{-a|z|^2/(ct)}\,dz.
\end{align*}
Set \(z=\sqrt t\,\xi\). Then \(dz=t^{d/2}d\xi\), and therefore
\[
\norm{H_t}{L^a}^a
=
C_a t^{-da/2+d/2}.
\]
Taking the \(a\)-th root gives
\begin{equation}
\label{eq:Hta}
\norm{H_t}{L^a}
\le
C_a t^{-\frac d2+\frac{d}{2a}}
=
C_a t^{-\frac d2(1-\frac1a)}.
\end{equation}

Similarly,
\begin{align*}
\norm{G_t}{L^a}^a
&=
C^a t^{-(d+1)a/2}
\int_{\R^d}e^{-a|z|^2/(ct)}\,dz\\
&=
C_a t^{-(d+1)a/2+d/2},
\end{align*}
hence
\begin{equation}
\label{eq:Gta}
\norm{G_t}{L^a}
\le
C_a
t^{-\frac12-\frac d2(1-\frac1a)}.
\end{equation}

\subsection{General \(L^r\)-to-\(L^q\) and gradient estimates}

Let \(1<r\le q<\infty\). Choose \(a\) so that Young's convolution relation holds:
\begin{equation}
\label{eq:youngrelation}
1+\frac1q=\frac1a+\frac1r.
\end{equation}
Equivalently,
\begin{equation}
\label{eq:aidentity}
1-\frac1a=\frac1r-\frac1q.
\end{equation}
Young's inequality together with \eqref{eq:conv-H} and \eqref{eq:Hta} yields
\begin{align}
\norm{S(t)g}{L^q}
&\le
\norm{H_t}{L^a}\norm{g}{L^r}
\nonumber\\
&\le
C
t^{-\frac d2(\frac1r-\frac1q)}
\norm{g}{L^r}.
\label{eq:general-LrLq}
\end{align}
Likewise, \eqref{eq:conv-G} and \eqref{eq:Gta} give
\begin{align}
\norm{\nabla S(t)g}{L^q}
&\le
\norm{G_t}{L^a}\norm{g}{L^r}
\nonumber\\
&\le
C
t^{-\frac12-\frac d2(\frac1r-\frac1q)}
\norm{g}{L^r}.
\label{eq:general-grad}
\end{align}
Thus, because \(0<t\le1\),
\begin{equation}
\label{eq:general-W1}
\norm{S(t)g}{W^{1,q}}
\le
C
t^{-\frac12-\frac d2(\frac1r-\frac1q)}
\norm{g}{L^r}.
\end{equation}

\subsection{The case \(r=q\)}

Taking \(r=q\) in \eqref{eq:general-LrLq} gives
\[
\norm{S(t)g}{L^q}\le C\norm{g}{L^q}.
\]
Taking \(r=q\) in \eqref{eq:general-grad} gives
\[
\norm{\nabla S(t)g}{L^q}
\le Ct^{-1/2}\norm{g}{L^q}.
\]
Hence
\[
\norm{S(t)g}{W^{1,q}}
\le
C(1+t^{-1/2})\norm{g}{L^q}.
\]
For \(0<t\le1\), \(1\le t^{-1/2}\), and therefore
\begin{equation}
\label{eq:appendix-seven}
\norm{S(t)g}{W^{1,q}}
\le
Ct^{-1/2}\norm{g}{L^q}.
\end{equation}

\subsection{The case \(r=q/2\)}

Now take \(r=q/2\). Since \(q>2\), \(r>1\), and
\[
\frac1r-\frac1q
=
\frac2q-\frac1q
=
\frac1q.
\]
Thus
\[
\norm{S(t)g}{L^q}
\le
Ct^{-d/(2q)}\norm{g}{L^{q/2}},
\]
and
\[
\norm{\nabla S(t)g}{L^q}
\le
Ct^{-\frac12-\frac{d}{2q}}
\norm{g}{L^{q/2}}.
\]
For \(0<t\le1\), the gradient term has the stronger singularity, so
\begin{equation}
\label{eq:appendix-eight}
\norm{S(t)g}{W^{1,q}}
\le
Ct^{-\theta}\norm{g}{L^{q/2}},
\qquad
\theta=\frac12+\frac{d}{2q}.
\end{equation}

The Duhamel integral is finite precisely if \(\theta<1\), namely
\[
\frac12+\frac{d}{2q}<1
\quad\Longleftrightarrow\quad
q>d.
\]
Moreover,
\begin{align}
\int_0^t(t-s)^{-\theta}\,ds
&=
\frac{t^{1-\theta}}{1-\theta}
\nonumber\\
&=
C_{d,q}t^\delta,
\qquad
\delta
=
1-\theta
=
\frac{q-d}{2q}>0.
\label{eq:time-integral-theta}
\end{align}

\subsection{Boundedness on \(W^{1,q}\)}

For the initial term in the mild formulation one does not use the singular estimate
\eqref{eq:appendix-seven}, because it would blow up as \(t\downarrow0\). Instead one
uses the strong continuity of the Neumann semigroup on \(W^{1,q}\):
\[
S(t)g\to g\quad\text{in }W^{1,q}\quad(t\downarrow0),
\]
and therefore
\[
\sup_{0\le t\le1}
\norm{S(t)g}{W^{1,q}}
\le C\norm{g}{W^{1,q}}.
\]
This is a standard consequence of the analytic-semigroup realization of \(L_p\);
equivalently, it follows from the standard \(W^{1,q}\) parabolic theory for the
homogeneous Neumann problem. This boundedness is used only for \(S(t)\omega_0\) (or
\(S(t)u_0\)); the singular smoothing estimates are used for the Duhamel terms.

\section{Singular and fractional Gr\"onwall inequalities}
\label{app:gronwall}

\subsection{The special kernel \((t-s)^{-1/2}\)}

We begin with the kernel that occurs in the global continuation estimate.

\begin{lemma}[Singular Gr\"onwall inequality with square-root kernel]
\label{lem:singular-gronwall}
Let \(Y\in C([0,T];[0,\infty))\) satisfy
\begin{equation}
\label{eq:sg-assumption}
Y(t)\le
A+B\int_0^t(t-s)^{-1/2}Y(s)\,ds,
\qquad 0\le t\le T,
\end{equation}
for some \(A,B\ge0\).  Then
\begin{equation}
\label{eq:sg-conclusion}
Y(t)
\le
A(1+2B\sqrt T)e^{\pi B^2t},
\qquad 0\le t\le T.
\end{equation}
In particular, \(Y\) is bounded on every finite time interval.
\end{lemma}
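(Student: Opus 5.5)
The plan is to iterate the inequality once, which turns the singular kernel $(t-s)^{-1/2}$ into a bounded kernel, and then apply the classical Gr\"onwall lemma. Substituting \eqref{eq:sg-assumption} into its own right-hand side gives
\[
Y(t)\le A+AB\int_0^t(t-s)^{-1/2}\,ds+B^2\int_0^t(t-s)^{-1/2}\int_0^s(s-\sigma)^{-1/2}Y(\sigma)\,d\sigma\,ds .
\]
The first integral equals $2\sqrt t\le 2\sqrt T$, which produces the constant $A(1+2B\sqrt T)$ in \eqref{eq:sg-conclusion}.

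For the double integral I will exchange the order of integration by Fubini. This is legitimate because $Y$ is continuous and nonnegative, so the integrand is nonnegative and measurable. The inner kernel then becomes
\[
\int_\sigma^t(t-s)^{-1/2}(s-\sigma)^{-1/2}\,ds .
\]
Under the substitution $s=\sigma+(t-\sigma)\lambda$ this is the Beta integral $B(1/2,1/2)=\pi$, independent of $t$ and $\sigma$. The result is
\[
Y(t)\le A(1+2B\sqrt T)+\pi B^2\int_0^tY(\sigma)\,d\sigma,\qquad 0\le t\le T.
\]

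The classical Gr\"onwall inequality for continuous functions with a constant inhomogeneity then yields $Y(t)\le A(1+2B\sqrt T)e^{\pi B^2 t}$, which is \eqref{eq:sg-conclusion}. Boundedness on finite intervals follows immediately.

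The only step needing care is the Fubini exchange combined with the exact evaluation of the Beta integral; everything else is routine.
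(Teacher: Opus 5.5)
Your proposal is correct and follows essentially the same argument as the paper. The paper also iterates the inequality once, uses the Beta-integral identity $K^2f=\pi\int_0^t f$ together with $K1=2\sqrt t$, and then applies the classical Gr\"onwall inequality. The self-substitution is legitimate because the kernel is nonnegative and $B\ge0$.
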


\begin{proof}
For a nonnegative function \(f\), define
\[
(Kf)(t):=
\int_0^t(t-s)^{-1/2}f(s)\,ds.
\]
Then \eqref{eq:sg-assumption} can be written as
\[
Y\le A+BKY.
\]
By Tonelli's theorem,
\begin{align*}
K^2f(t)
&=
\int_0^t f(r)
\left[
\int_r^t
(t-s)^{-1/2}(s-r)^{-1/2}\,ds
\right]dr.
\end{align*}
The change of variables \(s=r+(t-r)\xi\) gives
\[
\int_r^t
(t-s)^{-1/2}(s-r)^{-1/2}\,ds
=
\int_0^1
(1-\xi)^{-1/2}\xi^{-1/2}\,d\xi
=
\pi.
\]
Therefore
\begin{equation}
\label{eq:K2}
K^2f(t)=\pi\int_0^t f(r)\,dr.
\end{equation}
Applying \(K\) to \(Y\le A+BKY\) and using
\[
(K1)(t)=2\sqrt t
\]
together with \eqref{eq:K2}, we obtain
\[
KY(t)
\le
2A\sqrt t
+
\pi B\int_0^tY(s)\,ds.
\]
Substitution into \eqref{eq:sg-assumption} gives
\[
Y(t)
\le
A(1+2B\sqrt T)
+
\pi B^2\int_0^tY(s)\,ds.
\]
The classical Gr\"onwall inequality now yields \eqref{eq:sg-conclusion}.
\end{proof}

\subsection{General fractional Gr\"onwall inequality}

The preceding square-root kernel admits an elementary proof because its second self-convolution is constant.  The corresponding estimate for a general fractional kernel is expressed in terms of the Mittag--Leffler function.

\begin{theorem}[Fractional Gr\"onwall inequality]
\label{thm:fractional-gronwall}
Let \(0<\alpha<1\), let \(Y\in C([0,T];[0,\infty))\), and assume
\begin{equation}
\label{eq:fg-assumption}
Y(t)
\le
A+B\int_0^t(t-s)^{-\alpha}Y(s)\,ds,
\qquad 0\le t\le T,
\end{equation}
for some \(A,B\ge0\).  Set \(\beta:=1-\alpha\in(0,1)\).  Then
\begin{equation}
\label{eq:fg-result}
Y(t)
\le
A E_\beta\!\left(
B\Gamma(\beta)t^\beta
\right),
\qquad 0\le t\le T,
\end{equation}
where
\[
E_\beta(z)
:=
\sum_{k=0}^{\infty}
\frac{z^k}{\Gamma(\beta k+1)}
\]
is the Mittag--Leffler function.  In particular, \(Y\) is bounded on every finite
time interval.
\end{theorem}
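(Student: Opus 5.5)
We will mimic the proof of Lemma~\ref{lem:singular-gronwall}, replacing the exact identity for \(K^2\) by a full Neumann-series (Picard) iteration. Define the Volterra operator
\[
(K_\alpha f)(t):=\int_0^t(t-s)^{-\alpha}f(s)\,ds,
\]
which is positive: \(f\le g\) pointwise implies \(K_\alpha f\le K_\alpha g\). The hypothesis \eqref{eq:fg-assumption} reads \(Y\le A+BK_\alpha Y\). Iterating \(n\) times and using positivity at each step gives
\[
Y\le A\sum_{k=0}^{n-1}B^kK_\alpha^k1+B^nK_\alpha^nY .
\]

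The first key step is to compute the iterated kernels exactly. Write \(K_\alpha f=\Gamma(\beta)\,I^\beta f\), where \(I^\beta\) is the Riemann--Liouville integral of order \(\beta=1-\alpha\), with kernel \(\Gamma(\beta)^{-1}(t-s)^{\beta-1}\). The semigroup property \(I^\beta I^\gamma=I^{\beta+\gamma}\) follows, as in the derivation of \eqref{eq:K2}, from Tonelli's theorem and the Beta integral
\[
\int_r^t(t-s)^{\beta-1}(s-r)^{\gamma-1}\,ds=B(\beta,\gamma)(t-r)^{\beta+\gamma-1}.
\]
It yields \(K_\alpha^k=\Gamma(\beta)^kI^{k\beta}\). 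Since \(I^{k\beta}1=t^{k\beta}/\Gamma(k\beta+1)\), we obtain
\[
B^kK_\alpha^k1(t)=\frac{(B\Gamma(\beta)t^\beta)^k}{\Gamma(k\beta+1)}.
\]
These are exactly the terms of \(E_\beta(B\Gamma(\beta)t^\beta)\).

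The second step, which is the main point to check, is that the remainder vanishes. Because \(Y\) is continuous on \([0,T]\), it is bounded by \(\norm{Y}{L^\infty(0,T)}\). Positivity then gives
\[
B^nK_\alpha^nY(t)\le\norm{Y}{L^\infty(0,T)}\frac{(B\Gamma(\beta)T^\beta)^n}{\Gamma(n\beta+1)}.
\]
This bound tends to \(0\) as \(n\to\infty\), since \(\Gamma(n\beta+1)\) grows superexponentially; this is the same fact that makes the Mittag--Leffler series converge for every argument.

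Letting \(n\to\infty\) then gives \eqref{eq:fg-result}. Finally, \(E_\beta\) is finite and increasing on \([0,\infty)\), which yields boundedness of \(Y\) on finite intervals. The only delicate points are the Tonelli interchange for nonnegative integrands and the summability of the series. Both are routine, so we do not expect a genuine obstacle beyond bookkeeping the Beta-function identity.
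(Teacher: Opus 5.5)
Your proposal is correct and follows essentially the same route as the paper. The paper writes the hypothesis as $Y\le A+B\Gamma(\beta)\,g_\beta*Y$ with $g_\beta(t)=t^{\beta-1}/\Gamma(\beta)$, and uses the Beta-function identity $g_\beta*g_\gamma=g_{\beta+\gamma}$, which is your Riemann--Liouville semigroup property, to identify the iterated terms with the Mittag--Leffler coefficients. It then removes the remainder by the same bound $\sup_{[0,T]}Y\cdot(B\Gamma(\beta)T^\beta)^N/\Gamma(N\beta+1)\to0$.
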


\begin{proof}
Define
\[
g_\beta(t):=
\frac{t^{\beta-1}}{\Gamma(\beta)}.
\]
Since \(\beta-1=-\alpha\), inequality \eqref{eq:fg-assumption} is equivalent to
\[
Y\le A+\lambda\,g_\beta*Y,
\qquad
\lambda:=B\Gamma(\beta).
\]
For \(\beta,\gamma>0\), the beta-function identity gives
\begin{equation}
\label{eq:gbeta-conv}
g_\beta*g_\gamma=g_{\beta+\gamma}.
\end{equation}
Indeed,
\begin{align*}
(g_\beta*g_\gamma)(t)
&=
\frac1{\Gamma(\beta)\Gamma(\gamma)}
\int_0^t
(t-s)^{\beta-1}s^{\gamma-1}\,ds\\
&=
\frac{t^{\beta+\gamma-1}}
{\Gamma(\beta)\Gamma(\gamma)}
B(\beta,\gamma)\\
&=
\frac{t^{\beta+\gamma-1}}
{\Gamma(\beta+\gamma)}.
\end{align*}
Consequently, \(g_\beta^{*k}=g_{k\beta}\).

Iterating the inequality \(N\) times yields
\begin{equation}
\label{eq:iteration-fg}
Y(t)
\le
A\sum_{k=0}^{N-1}
\lambda^k(g_{k\beta}*1)(t)
+
\lambda^N(g_{N\beta}*Y)(t),
\end{equation}
where the \(k=0\) term is interpreted as \(1\).  For \(k\ge1\),
\[
(g_{k\beta}*1)(t)
=
\frac{t^{k\beta}}{\Gamma(k\beta+1)}.
\]
Since \(Y\) is continuous on \([0,T]\), it is bounded there, say by \(M_T\).
Hence
\[
0\le
\lambda^N(g_{N\beta}*Y)(t)
\le
M_T
\frac{\lambda^N t^{N\beta}}{\Gamma(N\beta+1)}
\longrightarrow0
\]
as \(N\to\infty\), uniformly for \(t\in[0,T]\).  Passing to the limit in
\eqref{eq:iteration-fg} gives
\[
Y(t)
\le
A\sum_{k=0}^{\infty}
\frac{\lambda^k t^{k\beta}}
{\Gamma(k\beta+1)}
=
A E_\beta(\lambda t^\beta),
\]
which is \eqref{eq:fg-result}.
\end{proof}

\subsection{Connection with the square-root kernel}

For \(\alpha=1/2\), one has \(\beta=1/2\) and
\[
\Gamma(1/2)=\sqrt\pi.
\]
Thus Theorem~\ref{thm:fractional-gronwall} gives
\[
Y(t)
\le
A E_{1/2}(B\sqrt{\pi t}).
\]
Lemma~\ref{lem:singular-gronwall} gives the more elementary, slightly coarser bound
\[
Y(t)
\le
A(1+2B\sqrt T)e^{\pi B^2t}.
\]
Both are sufficient for the continuation argument.

\section{Dini-derivative formulation of the oscillation estimate}
\label{app:dini}

This appendix establishes the differential inequality
\[
D^+\operatorname{osc}_\Omega\omega(t)
\le
-\operatorname{osc}_\Omega\omega(t)
\]
used in Section~\ref{sec:long-time}. We first record the Dini-derivative formulas for time-dependent extrema, then evaluate the equation at spatial extrema, and finally apply a Dini version of Gr\"onwall's inequality.

\subsection{Upper and lower right Dini derivatives}

For a real-valued function \(g\) on an interval, define
\begin{align}
D^+g(t)
&:=
\limsup_{h\downarrow0}
\frac{g(t+h)-g(t)}{h},
\label{eq:Dini-upper-def}\\
D_+g(t)
&:=
\liminf_{h\downarrow0}
\frac{g(t+h)-g(t)}{h}.
\label{eq:Dini-lower-def}
\end{align}
If \(g\) is differentiable, both quantities equal \(g'(t)\).

\begin{lemma}[Dini derivatives of spatial extrema]
\label{lem:Dini-max}
Let \(v\in C^1([0,T]\times\overline\Omega)\), and define
\[
M_v(t):=\max_{x\in\overline\Omega}v(t,x),
\qquad
m_v(t):=\min_{x\in\overline\Omega}v(t,x).
\]
Then
\begin{equation}
\label{eq:Dini-max-general}
D^+M_v(t)
\le
\max_{x\in\operatorname{Argmax}v(t,\cdot)}
\partial_t v(t,x),
\end{equation}
and
\begin{equation}
\label{eq:Dini-min-general}
D_+m_v(t)
\ge
\min_{x\in\operatorname{Argmin}v(t,\cdot)}
\partial_t v(t,x).
\end{equation}
\end{lemma}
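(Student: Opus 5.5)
We prove \eqref{eq:Dini-max-general} directly from the definition of \(D^+\) by a sequential compactness argument; \eqref{eq:Dini-min-general} then follows by applying it to \(-v\), since \(m_v=-M_{-v}\), \(D_+m_v=-D^+M_{-v}\), and \(\operatorname{Argmin}v(t,\cdot)=\operatorname{Argmax}(-v)(t,\cdot)\).

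Fix \(t\in[0,T)\). Choose a sequence \(h_n\downarrow0\) along which
\[
\frac{M_v(t+h_n)-M_v(t)}{h_n}\to D^+M_v(t).
\]
If \(D^+M_v(t)=-\infty\) there is nothing to prove. For each \(n\), pick \(x_n\in\operatorname{Argmax}v(t+h_n,\cdot)\); this is possible because \(\overline\Omega\) is compact and \(v(t+h_n,\cdot)\) is continuous. Since \(M_v(t)\ge v(t,x_n)\), we get
\[
\frac{M_v(t+h_n)-M_v(t)}{h_n}\le\frac{v(t+h_n,x_n)-v(t,x_n)}{h_n}=\partial_tv(t+s_nh_n,x_n)
\]
for some \(s_n\in(0,1)\), by the mean value theorem in time.

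By compactness of \(\overline\Omega\), pass to a subsequence with \(x_n\to x_*\). Continuity of \(v\) on the compact cylinder gives \(M_v\in C([0,T])\), by uniform continuity. Hence
\[
v(t,x_*)=\lim_n v(t+h_n,x_n)=\lim_n M_v(t+h_n)=M_v(t),
\]
so \(x_*\in\operatorname{Argmax}v(t,\cdot)\). Uniform continuity of \(\partial_tv\) on \([0,T]\times\overline\Omega\) gives \(\partial_tv(t+s_nh_n,x_n)\to\partial_tv(t,x_*)\). Therefore
\[
D^+M_v(t)\le\partial_tv(t,x_*)\le\max_{x\in\operatorname{Argmax}v(t,\cdot)}\partial_tv(t,x).
\]
The maximum on the right is attained because the argmax set is compact and \(\partial_tv(t,\cdot)\) is continuous.

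The main point requiring care is the identification of the limit point \(x_*\) as a maximizer at time \(t\), which uses the continuity of \(M_v\). Everything else is the mean value theorem together with uniform continuity on the compact cylinder. Note also that the lemma only ever evaluates \(D^+\) at points \(t<T\), so no endpoint issue arises. In the applications, the hypothesis \(v\in C^1([0,T]\times\overline\Omega)\) is supplied by Theorem~\ref{thm:classical-upgrade}, or on \([\tau,T]\) by Lemma~\ref{lem:positive-time-classical}.
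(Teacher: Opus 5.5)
Your proof is correct and follows the paper's own argument: a sequence \(h_n\downarrow0\) realizing the \(\limsup\), maximizers \(x_n\) at time \(t+h_n\), the bound \(M_v(t)\ge v(t,x_n)\), and a compactness limit \(x_n\to x_*\in\operatorname{Argmax}v(t,\cdot)\). The only differences are cosmetic: you invoke the mean value theorem explicitly, and you obtain the minimum case by applying the maximum case to \(-v\) instead of repeating the argument.
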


\begin{proof}
We prove \eqref{eq:Dini-max-general}; the argument for the minimum is analogous.
Choose \(h_n\downarrow0\) such that
\[
\frac{M_v(t+h_n)-M_v(t)}{h_n}
\longrightarrow D^+M_v(t),
\]
and choose \(x_n\in\overline\Omega\) with
\(M_v(t+h_n)=v(t+h_n,x_n)\).  Since \(v(t,x_n)\le M_v(t)\),
\begin{equation}
\frac{M_v(t+h_n)-M_v(t)}{h_n}
\le
\frac{v(t+h_n,x_n)-v(t,x_n)}{h_n}.
\label{eq:max-sequence-bound}
\end{equation}
By compactness, after passing to a subsequence,
\(x_n\to x_*\in\overline\Omega\).  Continuity of \(v\) implies
\(v(t,x_*)=M_v(t)\), so
\(x_*\in\operatorname{Argmax}v(t,\cdot)\).  The continuity of
\(\partial_t v\) then gives
\[
D^+M_v(t)
\le
\partial_t v(t,x_*)
\le
\max_{x\in\operatorname{Argmax}v(t,\cdot)}
\partial_t v(t,x).
\]

For the minimum, choose \(h_n\downarrow0\) so that the corresponding lower
difference quotients converge to \(D_+m_v(t)\), and choose
\(y_n\in\operatorname{Argmin}v(t+h_n,\cdot)\).  Since
\(m_v(t)\le v(t,y_n)\),
\[
\frac{m_v(t+h_n)-m_v(t)}{h_n}
\ge
\frac{v(t+h_n,y_n)-v(t,y_n)}{h_n}.
\]
After passing to a subsequence, \(y_n\to y_*\in
\operatorname{Argmin}v(t,\cdot)\), and therefore
\[
D_+m_v(t)
\ge
\partial_t v(t,y_*)
\ge
\min_{x\in\operatorname{Argmin}v(t,\cdot)}
\partial_t v(t,x).
\]
This proves \eqref{eq:Dini-min-general}.
\end{proof}

\subsection{Evaluation of the PDE at spatial extrema}

Assume now that \(\omega\) is a classical solution. At any
\(x_M\in\operatorname{Argmax}\omega(t,\cdot)\),
\[
\nabla\omega(t,x_M)=0,
\qquad
L_p\omega(t,x_M)\le0.
\]
For an interior maximum this is immediate. If \(x_M\in\partial\Omega\), the tangential
first derivatives vanish because \(x_M\) is a boundary maximum, while the normal
derivative vanishes by the Neumann condition. Hence the full gradient is zero.
The tangential second derivatives are nonpositive, and restriction to the inward
normal curve, together with the zero normal derivative, gives a nonpositive second
normal derivative. Thus \(\Delta\omega(t,x_M)\le0\); since
\(\nabla\omega(t,x_M)=0\), also
\(L_p\omega(t,x_M)\le0\).

Evaluating
\[
\omega_t
=
L_p\omega
-\nabla\phi\cdot\nabla\omega
-|\nabla\omega|^2
-(\omega-\bar\omega)
\]
at \(x_M\) therefore gives
\[
\omega_t(t,x_M)
\le
-(M(t)-\bar\omega(t)).
\]
Lemma~\ref{lem:Dini-max} yields
\begin{equation}
\label{eq:Dini-M-app}
D^+M(t)
\le
-(M(t)-\bar\omega(t)).
\end{equation}

Likewise, at any spatial minimum \(x_m\),
\[
\nabla\omega(t,x_m)=0,
\qquad
L_p\omega(t,x_m)\ge0,
\]
and hence
\[
\omega_t(t,x_m)
\ge
-(m(t)-\bar\omega(t))
=
\bar\omega(t)-m(t).
\]
Therefore
\begin{equation}
\label{eq:Dini-m-app}
D_+m(t)
\ge
\bar\omega(t)-m(t).
\end{equation}

\subsection{Dini derivative of the oscillation}

Set
\[
\mathcal O(t):=M(t)-m(t).
\]
For arbitrary real sequences \(a_h,b_h\),
\[
\limsup_{h\downarrow0}(a_h-b_h)
\le
\limsup_{h\downarrow0}a_h
-
\liminf_{h\downarrow0}b_h.
\]
Applying this with the difference quotients of \(M\) and \(m\) gives
\begin{equation}
\label{eq:Dini-difference-rule}
D^+\mathcal O(t)
\le
D^+M(t)-D_+m(t).
\end{equation}
Combining \eqref{eq:Dini-M-app}, \eqref{eq:Dini-m-app}, and
\eqref{eq:Dini-difference-rule},
\begin{align}
D^+\mathcal O(t)
&\le
-(M-\bar\omega)-(\bar\omega-m)
\nonumber\\
&=
-(M-m)
=
-\mathcal O(t).
\label{eq:Dini-osc-app}
\end{align}
This is the desired differential inequality.

\subsection{Dini--Gr\"onwall lemma}

\begin{lemma}[Dini--Gr\"onwall inequality]
\label{lem:Dini-Gronwall}
Let \(y\in C([0,T];[0,\infty))\) satisfy
\[
D^+y(t)\le-\lambda y(t),
\qquad
0\le t<T,
\]
for some \(\lambda>0\).  Then
\[
y(t)\le e^{-\lambda t}y(0),
\qquad
0\le t\le T.
\]
\end{lemma}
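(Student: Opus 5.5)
The plan is to reduce the claim to a monotonicity statement for an exponentially rescaled function. Set
\[
z(t):=e^{\lambda t}y(t),
\]
which is continuous on \([0,T]\) and nonnegative. The goal is to show that \(z\) is nonincreasing. Then \(z(t)\le z(0)=y(0)\), which is exactly \(y(t)\le e^{-\lambda t}y(0)\).

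First, compute the upper right Dini derivative of \(z\). Since \(e^{\lambda t}\) is differentiable, write
\[
\frac{z(t+h)-z(t)}{h}=e^{\lambda(t+h)}\frac{y(t+h)-y(t)}{h}+\frac{e^{\lambda(t+h)}-e^{\lambda t}}{h}\,y(t).
\]
The first factor \(e^{\lambda(t+h)}\) converges to the positive number \(e^{\lambda t}\), and the second summand converges to \(\lambda e^{\lambda t}y(t)\). Taking \(\limsup_{h\downarrow0}\) therefore gives
\[
D^+z(t)=e^{\lambda t}D^+y(t)+\lambda e^{\lambda t}y(t)\le0,
\qquad 0\le t<T.
\]
One should note that \(D^+y(t)\) could a priori be \(-\infty\). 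In that case the inequality holds trivially, so no finiteness issue arises.

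The second step, and the only genuine point, is the classical fact that a continuous function with \(D^+z\le0\) everywhere on \([0,T)\) is nonincreasing. To avoid the borderline case, I would fix \(\varepsilon>0\) and set \(z_\varepsilon(t):=z(t)-\varepsilon t\), so that \(D^+z_\varepsilon\le-\varepsilon<0\). Suppose \(z_\varepsilon(b)>z_\varepsilon(a)\) for some \(a<b\le T\). Pick a value \(c\) strictly between them, and let
\[
s:=\sup\{t\in[a,b]: z_\varepsilon(t)\le c\}.
\]
By continuity \(z_\varepsilon(s)=c\) and \(s<b\), while \(z_\varepsilon>c\) on \((s,b]\). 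This forces \(D^+z_\varepsilon(s)\ge0\), a contradiction. Hence \(z_\varepsilon\) is nonincreasing, and letting \(\varepsilon\downarrow0\) shows that \(z\) is nonincreasing on \([0,T]\), with continuity used to include the endpoint \(T\).

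Combining the two steps gives \(y(t)\le e^{-\lambda t}y(0)\) for \(0\le t\le T\). The same monotonicity argument also justifies the step used in the proof of Lemma~\ref{lem:maximum-principle}, namely that \(D^+M\le0\) implies \(M\) is nonincreasing. I expect the monotonicity step, with its sup-argument and \(\varepsilon\)-perturbation, to be the only place requiring care; the rest is algebra.
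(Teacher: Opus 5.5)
Your proof is correct and follows essentially the same route as the paper: rescale by $e^{\lambda t}$, use the product rule to get $D^+z\le0$, and prove monotonicity by subtracting a linear term and deriving a contradiction at an extremal point. The only cosmetic difference is that the paper takes the slope $\sigma=(g(b)-g(a))/(2(b-a))$ and works at a minimizer of $g-\sigma t$ on $[a,b]$, whereas you fix $\varepsilon>0$, work at a last-crossing point, and then let $\varepsilon\downarrow0$.
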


\begin{proof}
Define
\[
g(t):=e^{\lambda t}y(t).
\]
The product rule for upper right Dini derivatives gives
\[
D^+g(t)
\le
e^{\lambda t}\bigl(D^+y(t)+\lambda y(t)\bigr)
\le0.
\]
We verify the corresponding monotonicity criterion.  Suppose that
\(g(b)>g(a)\) for some \(0\le a<b\le T\), and set
\[
\sigma:=\frac{g(b)-g(a)}{2(b-a)}>0,
\qquad
h(t):=g(t)-\sigma t.
\]
Then \(h(b)>h(a)\).  Let \(c\in[a,b]\) be a point at which \(h\) attains its
minimum.  Since \(h(b)>h(a)\), one has \(c<b\), and therefore
\(D^+h(c)\ge0\).  On the other hand,
\[
D^+h(c)=D^+g(c)-\sigma\le-\sigma<0,
\]
a contradiction.  Hence \(g\) is nonincreasing, and
\[
e^{\lambda t}y(t)=g(t)\le g(0)=y(0).
\]
\end{proof}

Applying Lemma~\ref{lem:Dini-Gronwall} to
\(y=\mathcal O\) and \(\lambda=1\) in \eqref{eq:Dini-osc-app} yields
\[
\operatorname{osc}_\Omega\omega(t)
\le
e^{-t}\operatorname{osc}_\Omega\omega_0.
\]

\bibliographystyle{siamplain}
\bibliography{sobolev_global_convergence_references}

\end{document}